\pdfoutput=1

\documentclass[11pt]{article}

\usepackage{fullpage}

\usepackage{amsmath}
\usepackage{amssymb}
\usepackage{amsthm}
\usepackage{bm}
\usepackage{graphicx}
\usepackage{xcolor}
\usepackage{url}
\usepackage[ruled,vlined,linesnumbered,algo2e]{algorithm2e}
\usepackage{float}
\usepackage{tikz}
\usetikzlibrary{shapes.geometric}
\usepackage{enumitem}
\setlist[itemize]{leftmargin=*}
\setlist[enumerate]{leftmargin=*}
\usepackage{authblk}

\usepackage[colorinlistoftodos]{todonotes}
\newif\ifshownotes \shownotestrue
\ifshownotes
  \newcommand{\jlnote}[1]{\todo[color=blue!50,inline]{#1}}
  \newcommand{\mmnote}[1]{\todo[color=green!25,inline]{#1}}
  \newcommand{\lznote}[1]{\todo[color=magenta!15,inline]{#1}}
\else
  \newcommand{\jlnote}[1]{}
  \newcommand{\mmnote}[1]{}
  \newcommand{\lznote}[1]{}
\fi

\usepackage{hyperref}
\hypersetup{colorlinks=true, linkcolor=blue!80!black, urlcolor=blue!80!black, citecolor=blue!80!black}
\usepackage[capitalize,nameinlink]{cleveref}

\newtheoremstyle{siamplain}%
  {1ex plus .25ex minus .1ex}{1ex plus .25ex minus .1ex}%
  {\itshape}{}{\scshape}{.}{ }{}
\newtheoremstyle{siamremark}%
  {1ex plus .25ex minus .1ex}{1ex plus .25ex minus .1ex}%
  {\normalfont}{}{\itshape}{.}{ }{}

\theoremstyle{siamplain}
\newtheorem{theorem}{Theorem}
\newtheorem{lemma}[theorem]{Lemma}
\newtheorem{corollary}[theorem]{Corollary}
\newtheorem{proposition}[theorem]{Proposition}

\theoremstyle{siamremark}
\newtheorem{assumption}[theorem]{Assumption}

\crefname{assumption}{Assumption}{Assumptions}
\Crefname{assumption}{Assumption}{Assumptions}

\crefname{AlgoLine}{line}{lines}
\Crefname{AlgoLine}{Line}{Lines}
\crefname{algocf}{algorithm}{algorithms}
\Crefname{algocf}{Algorithm}{Algorithms}

\newcommand{\Ak}{\mathcal{A}_k}
\newcommand{\xb}{\mathbf{x}}
\newcommand{\yb}{\mathbf{y}}
\newcommand{\stepb}{\mathbf{s}}

\newcommand{\db}{\mathbf{d}}
\newcommand{\gammad}{\gamma_{\mathrm{dec}}}
\newcommand{\gammai}{\gamma_{\mathrm{inc}}}
\newcommand{\solver}[1]{\textsf{#1}}
\newcommand{\Algline}[2]{\hyperref[#2]{\Cref*{#1}, line~\ref*{#2}}}
\newcommand{\email}[1]{\href{mailto:#1}{#1}}

\newcommand{\TheTitle}{A Feasible Direction Method with Active-Set Identification for Derivative-Free Optimization}

\newcommand{\TheAuthors}{J. Larson, M. Menickelly, and L. Zebiane}

\title{\TheTitle\thanks{This material is based upon work supported by the U.S.~Department of
Energy, Office of Science, Office of Advanced Scientific Computing Research,
Scientific Discovery through Advanced Computing (SciDAC) Program through the
FASTMath Institute.}}

\author[1]{Jeffrey Larson}
\author[1]{Matt Menickelly}
\author[2]{Lara Zebiane}
\affil[1]{Mathematics and Computer Science Division, Argonne National Laboratory, Lemont, IL (\email{jmlarson@anl.gov}, \email{mmenickelly@anl.gov})}
\affil[2]{Department of Industrial and Systems Engineering, Lehigh University, Bethlehem, PA (\email{lara.zebiane@lehigh.edu})}

\hypersetup{pdftitle={\TheTitle}, pdfauthor={\TheAuthors}}

\begin{document}

\maketitle

\begin{abstract}
We propose a model-based trust region method for derivative-free optimization with simulation-based inequality constraints.
We are motivated by settings of simulation optimization in which both function evaluations at infeasible points may be unreliable and constraint evaluations are expensive.
Our method maintains feasible incumbents and computes steps from a subproblem suitable for a feasible-direction method in order to encourage generation of feasible trial points.
To reduce the computational cost of modeling the constraints, we describe an active-set estimation scheme that permits modeling with only a subset of constraints in each subproblem, as opposed to maintaining models of all constraints. 
We show that every limit point of the incumbents generated by our method is a Fritz-John point, and is additionally a KKT point under the Mangasarian-Fromovitz constraint qualification. 
Under standard nondegeneracy conditions, the active-set estimate identifies, within finite time, all constraints active at the limit point
of the incumbents.
Experiments on CUTEst problems show that our method generates feasible trial points more often than a comparable SQP-based approach and 
is competitive with an appropriate comparator in budgeted settings. 
\end{abstract}

\noindent\textbf{Keywords:} derivative-free optimization, constrained optimization, model-based methods, trust-region methods, active-set identification, feasible methods

\noindent\textbf{2020 Mathematics Subject Classification:} 90C56

\section{Introduction}\label{sec.intro}
We consider the inequality-constrained optimization problem
\begin{equation}\label{eq:opt_prob}
\begin{array}{rl}
\displaystyle\min_{\xb\in\mathbb{R}^n} & f(\xb)\\
\text{subject to} & c_i(\xb)\leq 0 \quad i=1,\dots,p,\\
\end{array}
\end{equation}
in which the objective function $f$ and each constraint function $c_i$ is assumed sufficiently smooth. 
We are additionally concerned with the \emph{derivative-free} setting, in which we have access only to function values $f(\xb)$ and $c_i(\xb)$ for a given $\xb$, and no (higher-order) derivative information. For $\xb \in \mathbb{R}^n$, we denote the set of active constraints at $\xb$ by $\mathcal{A}(\xb) := \{ i \in \{1,\dots,p\} : c_i(\xb) = 0 \}$.

Problem \cref{eq:opt_prob} has been extensively studied in the nonlinear optimization literature, leading to the development of numerous algorithmic frameworks. Among the most established and widely used are penalty and augmented Lagrangian (AL) methods \cite{ConnGoulToin91,NoceWrig06}, sequential quadratic programming (SQP) methods \cite{BoggToll95,GillMurrSaun02}, and interior-point methods \cite{WachBieg06}.

Model-based methods are widely used in derivative-free optimization (DFO); see \cite[Section 2.2]{LMW2019} for a survey. These methods construct surrogate models of the objective function $f$ (and, when applicable, the constraint functions $c_i$) using function evaluations at sampled points, typically through interpolation or regression. Derivatives of these surrogate models then serve as substitutes for the unavailable derivatives of the original functions. This approach has enabled the development of derivative-free analogues of classical nonlinear optimization methods, including penalty and augmented Lagrangian methods (e.g., \cite{AudeLeDiPeyr16,GramGrayLeDi16}) and sequential quadratic programming (SQP) methods (e.g., \cite{Trol16}).

Given the success of AL and SQP methods in nonlinear optimization, it is natural for the DFO community to adapt these methods into model-based variants. 
However, two peculiarities that arise in certain settings (often encountered in DFO) challenge some of the underlying assumptions of these nonlinear optimization methods.

The first peculiarity concerns \emph{unrelaxable constraints}, following the
taxonomy of black-box constraints in \cite{le2023taxonomy}. A constraint $c_i$
in \cref{eq:opt_prob} is unrelaxable provided points violating $c_i(\xb)\leq 0$
cannot be meaningfully evaluated.
For simulation-based constraints, such violations may correspond to inputs for
which the underlying simulation fails to produce a meaningful or interpretable objective value.
We elaborate on the specific scope of this failure mode, as it will matter in \Cref{sec.experiments}.
Throughout this paper, we assume that the constraint values $c_i(\xb)$ are well-defined at any queried point $\xb$, regardless of whether $c_i(\xb)\leq 0$, that is, regardless of whether $\xb$ is feasible with respect to $c_i$.
On the other hand, we suppose that $f(\xb)$ evaluated at any point violating a constraint may not be a reliable evaluation.
Consequently, any DFO method that permits infeasible incumbents may be unsuitable in this setting,
as they risk constructing surrogate models of the objective from invalid or
undefined values.
Although many established nonlinear optimization methods, including conventional
SQP methods, permit infeasible trial points, our proposed method maintains
feasible incumbents and guarantees that all accepted
trial points satisfy the constraints. 
This design choice makes our method more easily extendible to problems with unrelaxable constraints.
We do not offer a full extension of our method to problems with unrelaxable constraints in this manuscript, but we identify paths for near-term research that enable the treatment of unrelaxable constraints. 

As for the second peculiarity, in many practical settings of DFO, the relevant metric of utility of a given method is not iteration complexity, but is instead oracle complexity. 
In the context of a nonlinear optimization method for solving \cref{eq:opt_prob}, approximating a full constraint Jacobian by maintaining models of all $p$ constraint functions can be an expensive proposition. 
Naively, this expense could be understood by observing that a forward finite-difference approximation of the Jacobian will require $(n+1)$ evaluations of each of $p$ many constraints. 
In a simple case where all $p$ constraints are computationally independent and equally expensive, a single Jacobian approximation will entail an oracle cost of $p(n+1)$ units. 
We develop our method assuming a DFO method appropriate for expensive settings should be aware of the high expense of Jacobian approximation. 

\section{Our Contribution}
We propose a model-based trust-region (MBTR) method to address these issues.
As with any MBTR method \cite{ConnScheVice09}, 
our method will generate a sequence of incumbent points $\{\xb_k\}$ and corresponding trust-region radii $\{\Delta_k\}$. 
Our method will maintain models
$m^f \approx f$ and $m^{c_i} \approx c_i$ that are fully linear on a given trust region
$\mathcal{B}(\xb_k,\Delta_k) := \{\xb: \|\xb-\xb_k\|\leq\Delta_k\}$ where the norm $\|\cdot\|$ can be arbitrary.
We will define ``fully linear" more rigorously in \Cref{assm:interpolation}, but this term effectively means that the model makes approximation errors on $\mathcal{B}(\xb_k,\Delta_k)$ commensurate with those that would be made by a first-order Taylor model. 
Uncommon with many MBTR methods for \cref{eq:opt_prob}, the entire sequence $\{\xb_k\}$ of our method will consist of feasible points. 

\subsection{Encouraging Feasible Iterates}
To target the first issue, we will employ a subproblem that is nonstandard compared to modern approaches. 
In particular, we will employ the subproblem from the Topkis--Veinott method~\cite{TopkVein67,birge2000variant}, a feasible direction method (FDM) in the sense of Zoutendijk~\cite{Zout60}.  We refer to it as the \emph{FDM subproblem} and to the resulting step as the \emph{FDM step}.

Standard trust-region SQP methods employ a subproblem resembling
\begin{equation}\label{eq:ideal_sqp}
\begin{array}{rl}
\displaystyle\min_{\xb\in\mathcal{B}(\xb_k,\Delta_k)}
& m^f(\xb)\\
\text{subject to} & m^{c_i}(\xb)\leq 0 \quad i=1,\dots,p,\\
\end{array}
\end{equation}
where 
$\mathcal{B}(\xb_k,\Delta_k)$
denotes the trust region: a ball centered around an incumbent $\xb_k$ with radius $\Delta_k$.
In standard SQP methods, $m^f$ is a quadratic model and $m^{c_i}$ are all linear models; if one employs the $\|\cdot\|_{\infty}$ norm to define the trust region, then \cref{eq:ideal_sqp} is a quadratic program (QP), a problem for which efficient and robust solvers exist. 

In contrast (and motivated by our desire to ensure that solutions to subproblems are feasible) we will instead adopt a subproblem resembling
\begin{align}\label{eq:tv_subproblem}
    \min_{(\xb, \tau) \in \mathcal{B}(\xb_k,\Delta_k)\times \mathbb{R}}
    & \tau + \frac{1}{2}(\xb-\xb_k)^T H_k (\xb-\xb_k) \\ \nonumber
    \text{subject to} \quad
    & m^f(\xb) \leq f(\xb_k)+\tau,\\\nonumber
    &m^{c_i}(\xb) \leq \tau
    \quad  i =1,\dots,p . 
\end{align}
In \cref{eq:tv_subproblem}, $m^f$ and $m^{c_i}$ are both chosen to be linear models, and so \cref{eq:tv_subproblem} is still a QP when we employ the $\|\cdot\|_{\infty}$ norm to define the trust region. 
In contrast to \cref{eq:ideal_sqp}, the subproblem in \cref{eq:tv_subproblem} aggregates both constraint violations and objective improvement into the scalar $\tau$.
This aggregation encourages the trial step generated from the solution to \cref{eq:tv_subproblem} to be interior to the model constraint; observe that $(\xb_k, 0)$ is always feasible for \cref{eq:tv_subproblem}.
The empirical upshot of employing \cref{eq:tv_subproblem}, which we will demonstrate in this paper, is that this results more often than not in the generation of trial steps that are simultaneously feasible for all constraints. 
\Cref{fig.intro}(a) illustrates the difference between employing these two subproblems.
\begin{figure}[t]
  \centering
  \begin{minipage}[b]{0.49\linewidth}
    \centering
    \begin{tikzpicture}[scale=3.6, line width=1pt, >=stealth,
  every node/.style={font=\small}]
  \begin{scope}
    \clip (-0.80,-0.85) rectangle (0.85,0.72);
    \begin{scope}
      \clip (-0.35,-0.85) circle (1);
      \fill[black!12] (-0.75,0.8) -- (0.5,0.8) -- (0.5,-0.85) -- (-0.4375,-0.85) -- (-0.75,-0.6) -- cycle;
    \end{scope}
    \draw[black] (-0.35,-0.85) circle (1);                                   %
    \draw[black] (0.5,-1) -- (0.5,1);                                        %
    \draw[black] (-0.75,-1) -- (-0.75,1);                                  %
    \draw[black] (-1,-0.4) -- (0.5,-1.6);                                    %
    \draw[black!55] (-0.4,-0.4) rectangle (0.4,0.4);
    \draw[dashed] (-0.80,{(0.155+0.56)/1.7}) -- (0.85,{(0.155-0.595)/1.7});
    \draw[->, shorten >=2.5pt] (0,0) -- (0.4,-0.0735);
    \draw[->, shorten >=2.5pt] (0,0) -- (0.2732,-0.1547);
    \draw[->] (-0.35,-0.75) -- (-0.20,-0.705) node[right] {$-\nabla f$};
  \end{scope}
  \fill (0,0) circle (0.015) node[above left=-1pt] {$\mathbf{x}_k$};
  \fill (0.5,-0.3232) circle (0.015);
  \node[below right=0pt] at (0.5,-0.3232) {$\bar{\mathbf{x}}$};
  \node at (0.72,-0.73) {$c_1$};
  \node at (0.57,0.69) {$c_2$};
  \node at (-0.68,0.69) {$c_3$};
  \node at (-0.62,-0.80) {$c_4$};
  \draw[fill=white, line width=0.8pt] (0.4,-0.0735) circle (0.022);
  \fill (0.2732,-0.1547) circle (0.022);
  \node[anchor=south] at (0.40,-0.045) {SQP};
  \node[anchor=north] at (0.25,-0.19) {FDM};
  \node at (0.74,-0.12) {$m^{c_1}$};
\end{tikzpicture}\\[2pt]
    {\small (a) SQP versus FDM step}
  \end{minipage}\hfill
  \begin{minipage}[b]{0.49\linewidth}
    \centering
    \begin{tikzpicture}[scale=3.6, line width=1pt, >=stealth,
  every node/.style={font=\small}]
  \begin{scope}
    \clip (-0.80,-0.85) rectangle (0.85,0.72);
    \begin{scope}
      \clip (-0.35,-0.85) circle (1);
      \fill[black!12] (-0.75,0.8) -- (0.5,0.8) -- (0.5,-0.85) -- (-0.4375,-0.85) -- (-0.75,-0.6) -- cycle;
    \end{scope}
    \draw[black] (-0.35,-0.85) circle (1);                                   %
    \draw[black] (0.5,-1) -- (0.5,1);                                        %
    \draw[black!35] (-0.75,-1) -- (-0.75,1);                                  %
    \draw[black!35] (-1,-0.4) -- (0.5,-1.6);                                    %
    \draw[black!55] (-0.4,-0.4) rectangle (0.4,0.4);
    \draw[dotted] (-0.632,-0.632) rectangle (0.632,0.632);
    \draw[->] (-0.35,-0.75) -- (-0.20,-0.705) node[right] {$-\nabla f$};
  \end{scope}
  \fill (0,0) circle (0.015) node[above left=-1pt] {$\mathbf{x}_k$};
  \fill (0.5,-0.3232) circle (0.015);
  \node[below right=0pt] at (0.5,-0.3232) {$\bar{\mathbf{x}}$};
  \node at (0.72,-0.73) {$c_1$};
  \node at (0.57,0.69) {$c_2$};
  \node at (-0.68,0.69) {$c_3$};
  \node at (-0.62,-0.80) {$c_4$};
  \node[font=\footnotesize] at (0,-0.4) [below] {$\Delta_k$};
  \node[font=\footnotesize] at (-0.632,-0.60) [right] {$\Delta_k^{\sigma}$};
\end{tikzpicture}\\[2pt]
    {\small (b) Active-set estimate}
  \end{minipage}
  \caption{(a) For a curved constraint, the SQP step lands on the linearized constraint (dashed) and is infeasible; the FDM step stays feasible. (b) Near the solution $\bar{\xb}$, the trust-region radius $\Delta_k$ may become small before a step is accepted, so the trust region (gray box) can miss a constraint that is active at $\bar{\xb}$, here $c_2$. The estimate $\Ak$ is formed on the larger box of radius $\Delta_k^\sigma$ (dotted), which reaches both constraints active at $\bar{\xb}$: $\Ak = \{1,2\} = \mathcal{A}(\bar{\xb})$, while the far-away constraints $c_3$ and $c_4$ (light gray) are left out.}
  \label{fig.intro}
\end{figure}

\subsection{Active-Set Estimation}
We now target the second issue of reducing function evaluations. 
Rather than modeling and enforcing every constraint at every iteration, our algorithm maintains an \emph{active-set estimate} $\Ak$ of the constraints that are likely to become active near the current trust region, and imposes only those constraints in the subproblem. Specifically, for a fixed $\sigma\in(0,1)$, we define
\begin{equation*}
  \Ak := \Big\{ i \in \{1,\dots,p\} : \max_{\xb \in \mathcal{B}(\xb_k,\Delta_k^\sigma)} m^{c_i}(\xb) \geq 0 \Big\},
\end{equation*}
which by \Cref{lem.subproblems} amounts to the test $c_i(\xb_k) + \Delta_k^{\sigma}\|g_k^{c_i}\|_1 \geq 0$. 
The test weighs the slack $c_i(\xb_k)$, which is known exactly, against the largest increase the model can achieve on a ball of radius $\Delta_k^\sigma$.
Constraints that are far from being violated in this metric can be excluded from $\Ak$, meaning function evaluations need not be spent on improving models of such constraints. 
The radius $\Delta_k^{\sigma}$ is larger than the trust-region radius $\Delta_k$ when $\Delta_k < 1$; this ensures that, near a limit point $\bar x$, the constraints active at $\bar x$ are maintained in $\Ak$, even when the trust region is too small to see the activity, see \Cref{fig.intro}(b). The effect is not on any single step but across iterations: it keeps $\Ak$ from fluctuating as $\Delta_k$ shrinks and grows near the solution, which is what makes identification possible.

Under standard nondegeneracy conditions at a limit point $\bar{\xb}$, \Cref{thm.identification} shows that, for all sufficiently large $k$, the estimate formed after each accepted step equals $\mathcal{A}(\bar{\xb})$ and is not augmented during the iteration; locally, the method behaves as if the inactive constraints were absent. 
Active-set identification has a long history in derivative-based optimization~\cite{BurkMore88,Wrig93,FaccFiscKanz98,OberWrig06} and has recently been extended to noisy and stochastic settings~\cite{CurtRobiZebi25}. 
To our knowledge, this is the first active-set identification result for a
model-based derivative-free method, and the first for derivative-free
optimization with general inequality constraints; Brilli et
al.~\cite{BrilCrisLiuzLuci26} recently established an analogous result for a
line-search-based derivative-free method restricted to bound constraints.

\Cref{sec.experiments} reports the savings in function evaluations, most pronounced under small budgets, provided by active-set estimation and lazy model updates.

\subsection{Theory and Experiments}

Our main theoretical result, \Cref{thm.main}, establishes global convergence to first-order stationarity: either the algorithm reaches a Fritz-John point of~\eqref{eq:opt_prob} in finitely many iterations, or it generates infinitely many nonzero accepted steps and every limit point of the iterates is a Fritz-John point. Under the Mangasarian-Fromovitz constraint qualification (MFCQ), these limit points are also KKT points (\Cref{thm.kkt}) and the active-set estimate identifies the constraints active at nondegenerate limit points (\Cref{thm.identification}).
In numerical experiments on problems from the CUTEst collection, an FDM subproblem produces feasible trial points far more often than an SQP subproblem.
Moreover, not updating models of constraints not in our active-set estimate $\mathcal{A}_k$ reduces the number of constraint evaluations per point and lets an implementation of our method solve problems sooner under a small budget, with a lessened advantage under a large budget; the optimal active set is identified within budget on most problems.

The remainder of the paper is organized as follows. \Cref{sec:related} reviews related work. \Cref{sec.assumptions} states persistent assumptions that will apply throughout the paper and \Cref{sec.algorithm} presents the algorithm and a global convergence analysis to Fritz-John stationary points. 
\Cref{sec.consequences} shows that, under additional assumptions (primarily MFCQ), limit points exist and limit points are KKT stationary.
\Cref{sec.identification} proves our claims concerning active-set identification
and \Cref{sec.experiments} reports numerical experiments.

\section{Related Work}\label{sec:related}

The model-based analogues of AL and SQP methods mentioned in the introduction permit infeasible iterates. The same holds for COBYLA~\cite{powell1994cobyla}, which builds linear models of the objective and constraints within a trust region and is among the most widely used solvers for constrained DFO. COBYQA~\cite{Rago22}, an SQP extension of COBYLA, likewise permits infeasibility with respect to general nonlinear constraints.

Fewer model-based methods maintain feasibility of incumbents, as we do. CONORBIT~\cite{regis2017conorbit} builds radial basis function models of the objective and constraints within a trust region.
CONORBIT can permit a small 
amount of constraint violation in its incumbents. 
Hence, CONORBIT is also potentially extendable to address the first of our identified issues, in the special case where no constraint violation is permitted.
NOWPAC~\cite{augustin2014nowpac} maintains feasible incumbents by augmenting the constraints with an inner boundary path that shrinks as the method converges. 

In general, when constraints are convex and algebraically available, so that only the objective is a black box, feasibility can be enforced exactly; this is achieved via projections in~\cite{hough2022model}. 
Unrelaxable constraints also motivate the interior-point line-search method of~\cite{BrilLiuzLuci25}, which enforces strictly feasible incumbents by means of a logarithmic barrier.
There are also modeling approaches~\cite{Padidar2022} to handle simple
unrelaxable constraints by adapting unconstrained DFO methods rather than by
constructing models of the constraints themselves.

Trying to estimate constraints that are active near an incumbent is not a new idea.
Gratton, Toint, and Tr{\"o}ltzsch~\cite{GratToinTrol11} propose a model-based
trust-region method for bound-constrained DFO that restricts the construction
of fully linear models to the subspace defined by the constraints estimated to
be active, and CONORBIT's margin~\cite{regis2017conorbit} is similarly
tightened only around constraints nearing activity. Neither establishes that
the restricted set converges to the true active set at a limit point. 
Brilli,
Cristofari, Liuzzi, and Lucidi~\cite{BrilCrisLiuzLuci26} recently proved such
an identification result, but for a line-search-based (model-free) method
restricted to bound constraints, under a strict-complementarity condition
similar to our \Cref{assm:nondegeneracy}.

Our method differs from all of these approaches in two respects. First, no margin from
the constraints is imposed or tuned as in interior point methods, CONORBIT, or NOWPAC.
Instead, the subproblem~\eqref{eq:step_subproblem}
steers trial steps away from the linearized constraints when this is
useful, while incumbents may approach, or lie on, the boundary of the feasible
region, where solutions often lie.
Second, through an active-set estimate,
the method limits the number of constraints that must be modeled and evaluated,
and we prove that this estimate identifies the constraints active at a solution.
This, to our knowledge, is the first active-set identification guarantee for
model-based DFO and, more generally, for DFO with  simulation-based inequality
constraints.

\section{Assumptions}\label{sec.assumptions}
In this section, we state assumptions that will hold throughout most of our analysis. 
Additional assumptions will be made in \Cref{sec.consequences} and \Cref{sec.identification}, but will not be immediately required in \Cref{sec.algorithm}. 
We first collect two fairly standard assumptions about the functions $f$ and $c_i$, concerning their differentiability and boundedness, respectively. 

\begin{assumption}
\label[assumption]{assm:smoothness}
  The functions $f$ and $c_i$, $i \in \{1,\dots,p\}$, are continuously differentiable with bounded and Lipschitz continuous gradients: there exist $\rho_{\nabla f}, L_{\nabla f} \in (0,\infty)$ such that $\|\nabla f(\xb)\|_2 \leq \rho_{\nabla f}$ and $\|\nabla f(\xb) - \nabla f(\bar{\xb})\|_2 \leq L_{\nabla f}\|\xb - \bar{\xb}\|_2$ for all $(\xb,\bar{\xb}) \in \mathbb{R}^n \times \mathbb{R}^n$, and likewise for each $c_i$ with constants $\rho_{\nabla c_i}, L_{\nabla c_i} \in (0,\infty)$.
\end{assumption}

\begin{assumption}\label[assumption]{assm:bounded_below}
  There exists $\underline f\in\mathbb R$ such that $f(\xb)\geq\underline f$ for all feasible $\xb$.
\end{assumption}

Our next assumption states that (somehow, to be discussed later), the models $m^f$ and $\{m^{c_i}:i=1,\dots,p\}$ are always fully linear models of $f$ and $\{c_i:i=1,\dots,p\}$ on a given sequence of trust regions. 

\begin{assumption}
\label[assumption]{assm:interpolation}
    Given a sequence of trust regions $\mathcal{B}(\xb_k,\Delta_k)$, the sequence of {linear} models $\{m_k^f\}$ and $\{m_k^{c_i}:i \in \{1,\dots,p\}\}$, are fully linear models of $f$ and $\{c_i:i=1,\dots,p\}$, respectively, on $\mathcal{B}(\xb_k,\Delta_k)$.
    That is, there exist $\kappa_f, \kappa_{\nabla f} \in (0,\infty)$ such that $|f(\xb) - m_k^f(\xb)| \leq \kappa_f \Delta_k^2$ and $\|\nabla f(\xb) - \nabla m_k^f(\xb)\|_2 \leq \kappa_{\nabla f} \Delta_k$ for all $\xb \in \mathcal{B}(\xb_k,\Delta_k)$.
    The definition holds similarly for each $c_i$ with constants $\kappa_{c_i}, \kappa_{\nabla c_i} \in (0,\infty)$.
\end{assumption}

Our last assumption in this section places restrictions on our choice of quadratic term appearing in an FDM subproblem. 
\begin{assumption}
\label[assumption]{assm:bounded_hessian}
  The sequence of matrices $\{H_k\}$ is positive definite and uniformly bounded over $k \in \mathbb{N}$, that is, there exists $\overline\zeta \in (0,\infty)$ such that, for all $k \in \mathbb{N}$, both $H_k \succ 0$ and $\stepb^T H_k \stepb \leq \overline\zeta \|\stepb\|_2^2$ for all $\stepb \in \mathbb{R}^n$.
\end{assumption}

\section{Constrained DFO Algorithm}\label{sec.algorithm}

From this point onwards, we make a consistent choice of norm in defining a trust region, that is we define $\mathcal{B}(\xb_k,\Delta) := \{\xb \in \mathbb{R}^n : \|\xb - \xb_k\|_\infty \leq \Delta\}$.
With this choice, the step subproblem~\eqref{eq:step_subproblem} is a QP. 
Our method employs a model-based criticality measure
\begin{equation}\label{eq.chik_def}
  \chi_k(\xb_k,\mathcal{A}) := -\min_{\stepb\in \mathcal{B}(0,1)} \max \left\{ {g_k^f}^T \stepb, \max_{i\in\mathcal{A}} \big( c_i(\xb_k) + {g_k^{c_i}}^T \stepb \big) \right\},
\end{equation}
where $g_k^f$ and $g_k^{c_i}$ are the model gradients defined in \Algline{alg.main_alg_chi}{step.models} and the inner maximum over an empty index set is omitted (so that $\chi_k(\xb_k,\emptyset) = \|g_k^f\|_1$).  
Its exact counterpart $\chi$, employed in our analysis, is defined in \eqref{eq.chi_def} below.

\begin{algorithm2e}[!htbp]
\caption{Feasible Active-Set Trust-Region Method}
\label{alg.main_alg_chi}
\DontPrintSemicolon
\KwIn{$\xb_0 \in \mathbb{R}^n$ with $c(\xb_0) \leq 0$;\, $\Delta_0 \geq \underline\Delta > 0$;\, $\mu > 0$;\, $\sigma, \gammad \in (0,1)$;\, $\gammai \geq 1$;\, $0 < \eta_1 \leq \eta_2 < 1$}
\For{$k = 0,1,2,\dots$}{
  Build/update linear models $m^f(\xb_k + \stepb) = f(\xb_k) + {g_k^f}^T\stepb$ and $m^{c_i}(\xb_k + \stepb) = c_i(\xb_k) + {g_k^{c_i}}^T\stepb$, $i = 1,\dots,p$\nllabel{step.models}\;
  Set
  \begin{equation}\label{eq:active_subproblem}
    \Ak \gets \Big\{ i \in \{1,\dots,p\} : \max_{\tilde{\stepb} \in \mathcal{B}(0,\Delta_k^\sigma)} m^{c_i}(\xb_k + \tilde{\stepb}) \geq 0 \Big\}
  \end{equation}\nllabel{step.active_set}\vspace{-1.1\baselineskip}\;
  \While{$\Delta_k > \mu\, \chi_k(\xb_k,\Ak)$\nllabel{step.criticality}}{
    Set $\Delta_k \gets \gammad \Delta_k$\; 
    Rebuild the models as in line~\ref{step.models} and $\Ak$ as in \eqref{eq:active_subproblem}\;
  }
  \While{true\nllabel{step.mainwhile}}{
    Choose $H_k \succ 0$ and compute\nllabel{step.sk}
    \begin{equation}\label{eq:step_subproblem}
      \begin{aligned}
        (\stepb_k,\tau_k) \in \arg\min_{\stepb \in \mathcal{B}(0,\Delta_k),\ \tau \in \mathbb{R}}\ & \tau + \tfrac12 \stepb^T H_k \stepb \\
        \text{s.t.}\ & m^f(\xb_k + \stepb) \leq f(\xb_k) + \tau, \\
                     & m^{c_i}(\xb_k + \stepb) \leq \tau\ \ \text{for all}\ i \in \Ak
      \end{aligned}
    \end{equation}\vspace{-1.1\baselineskip}\;
    Set $i_{\mathrm{viol}} \gets 0$\;
    \For{$i = 1,\dots,p$}{
      Evaluate $c_i(\xb_k + \stepb_k)$ \nllabel{line:eval_ci}
      \lIf{$c_i(\xb_k + \stepb_k) > 0$}{set $i_{\mathrm{viol}} \gets i$ and \textbf{break}\nllabel{line:set_iviol}}
    }
    \uIf{$i_{\mathrm{viol}} = 0$ \textnormal{and} $\stepb_k \neq 0$\nllabel{line:iviol_zero}}{
      Evaluate $f(\xb_k + \stepb_k)$\; 
      Compute $\rho_k = \big(f(\xb_k) - f(\xb_k + \stepb_k)\big) / \big(m^f(\xb_k) - m^f(\xb_k + \stepb_k)\big)$\;
      \lIf{$\rho_k \geq \eta_2$}{set $\xb_{k+1} \gets \xb_k + \stepb_k$ and $\Delta_{k+1} \gets \max\{\underline\Delta, \gammai\Delta_k\}$\nllabel{line:very_successful}}
      \lElseIf{$\rho_k \geq \eta_1$}{set $\xb_{k+1} \gets \xb_k + \stepb_k$ and$\Delta_{k+1} \gets \max\{\underline\Delta, \Delta_k\}$\nllabel{line:successful}}
      \lElse{set $\xb_{k+1} \gets \xb_k$ and $\Delta_{k+1} \gets \gammad\Delta_k$\nllabel{line:unsuccessful}}
      Go to iteration $k+1$\;
    }
    \uElseIf{$i_{\mathrm{viol}} \geq 1$, $i_{\mathrm{viol}} \notin \Ak$, \textnormal{and} $\stepb_k \neq 0$\nllabel{line:expand_active_set}}{
      Set $\tilde{\mathcal{A}}_k \gets \Ak \cup \{i_{\mathrm{viol}}\}$ and build/update the linear model $m^{c_{i_{\mathrm{viol}}}}$\;
      \uIf{$\Delta_k \leq \mu\, \chi_k(\xb_k,\tilde{\mathcal{A}}_k)$\nllabel{step.critical_verify}}{
        Set $\Ak \gets \tilde{\mathcal{A}}_k$\nllabel{step.critical_verify_update}\;
      }
      \lElse{set $\xb_{k+1} \gets \xb_k$ and $\Delta_{k+1} \gets \gammad\Delta_k$; go to iteration $k+1$}
    }
    \lElse{set $\xb_{k+1} \gets \xb_k$ and $\Delta_{k+1} \gets \gammad\Delta_k$; go to iteration $k+1$\nllabel{line:rejected_viol}}
  }
}
\end{algorithm2e}

\subsection{Discussion of the algorithm}

Here, we introduce and explain a few choices made in \Cref{alg.main_alg_chi}. 
\Cref{alg.main_alg_chi} starts from a feasible point $\xb_0$ and indexes iterations by $k=0,1,\dots$. 
Recall that the logic of the algorithm is going to be designed to ensure that all incumbents $\xb_k$ remain feasible as the algorithm progresses. 
At iteration $k$, the algorithm either builds or updates linear models $m^f$ and $m^{c_i}$ of, respectively, $f$ and $c_i$, around the current incumbent $\xb_k$. 
The analysis will require (viz. \Cref{assm:interpolation}) that these linear models be fully linear models. 

\Cref{alg.main_alg_chi} then selects a subset of constraints that are potentially active, according to \eqref{eq:active_subproblem}. 
As a practical matter, because the models $m^{c_i}$ are linear and trust regions are defined in the $\ell_\infty$ norm, $\Ak$ has the more explicit form
\begin{equation}\label{eq.Ak_explicit}
  \Ak = \{i\in\{1,\dots,p\}:c_i(\xb_k)+\Delta_k^\sigma\|g_k^{c_i}\|_1\geq0\}.
\end{equation}
We also note that the exponential factor $\sigma>0$ is necessary to demonstrate the identification result in \Cref{sec.identification} in the spirit of~\cite{CurtRobiZebi25,OberWrig06}. 

\Cref{alg.main_alg_chi} will then enter what we call the \emph{criticality loop} in \Algline{alg.main_alg_chi}{step.criticality}. 
The criticality loop will iteratively shrink $\Delta_k$ until it is sufficiently small compared to the criticality measure $\chi_k(\xb_k,\Ak)$.  
With each shrinkage of $\Delta_k$, we must of course reevaluate our estimate $\Ak$. 
In theory, each shrinkage of $\Delta_k$ also entails ensuring that $m^f$ and all constraint models $m^{c_i}$ satisfy \Cref{assm:interpolation} on progressively smaller trust-regions.
In practice, however, assuring \Cref{assm:interpolation} holds for all $p+1$ models can accrue large computational expense.
We find that ensuring all such approximation bounds is unnecessary and we will only enforce fully linearity of $\{m^{c_i}:i\in\mathcal{A}_k\}$, where $\Ak$ in \eqref{eq.Ak_explicit} is determined from the most recently computed $g_k^{c_i}$. 

Once the trust-region radius $\Delta_k$ is commensurate with the criticality measure $\chi_k(\xb_k,\Ak)$, we can enter the \emph{main loop} beginning in \Algline{alg.main_alg_chi}{step.mainwhile}, which will generate at least one trial point. 
\Cref{alg.main_alg_chi} will solve the step subproblem \eqref{eq:step_subproblem} using the constraints in $\Ak$.
 Recalling that because $(\stepb,\tau)=(0,0)$ is feasible for any given problem data, any nonzero minimizing step has $\tau_k<0$. 
 Such a step with $\tau_k<0$ therefore predicts objective decrease and is strictly interior to the model constraints indexed by $\Ak$. 
 As we will show in \Cref{lem.well-defined}, if the criticality loop was exited, then there exists such a nonzero step, and hence, a positive predicted objective decrease. 
 
 These properties of $(\stepb,\tau)$ encourage feasibility of the trial point, but cannot guarantee it.
Thus, \Cref{alg.main_alg_chi} evaluates all of $\{c_i(\xb_k+\stepb_k): i=1,\dots,p\}$, one at a time, stopping at the first constraint violation.
If the constraint $c_i$ is violated and both $i\not\in\Ak$ and $\stepb_k\neq 0$, then 
the model $m^{c_i}$ is updated (so as to satisfy \Cref{assm:interpolation}) and $i$ is tentatively added to $\Ak$. 
Since growing $\Ak$ can generally decrease the criticality measure $\chi_k(\xb_k,\Ak)$, \Cref{alg.main_alg_chi} must recheck the criticality condition. 
If the criticality condition still holds with the tentative $\Ak$, we let the tentative $\Ak$ become the value of $\Ak$;
if the criticality condition would fail with the tentative $\Ak$, we shrink the trust region radius and start a new iteration. 
 
If, on the other hand, all constraints are satisfied ($c_i(\xb_k+\stepb_k)\leq 0$ for all $i=1,\dots,p$) and $\stepb_k$ is nonzero, then \Cref{alg.main_alg_chi} finally evaluates the objective $f(\xb_k+\stepb_k)$ and computes $\rho_k$, the ratio of actual to predicted objective decrease. 
As in standard trust-region methods, the trial incumbent $\xb_k+\stepb_k$ is accepted when $\rho_k\geq\eta_1$; if also $\rho_k\geq\eta_2$, the trust-region radius is increased. 
On the other hand, if $\rho_k<\eta_1$, then the incumbent $\xb_k$ is unchanged and the radius is contracted. 
Thus, accepted steps decrease the objective and all incumbents $\{\xb_k\}$ are feasible. 

Compared to standard trust-region methods, and as a peculiarity of \Cref{alg.main_alg_chi}, 
the trust-region radius is additionally reset to at least $\underline\Delta$ following any accepted step.
This reset supplies a lower bound used in the entry condition for the identification argument. 
The reset does not impose a lower bound on the radii used to compute steps in \eqref{eq:step_subproblem}; subsequent criticality loops and unsuccessful iterations may reduce the radius to a value again below $\underline\Delta$.

We also note that, in the statement of \Cref{alg.main_alg_chi}, there is no stopping criterion provided. We will demonstrate in \Cref{lem.well-defined} that if the criticality loop cannot be exited in the $k$th iteration, then $\xb_k$ is a Fritz-John point. 
Otherwise, practical stopping tests will be described in \Cref{sec.impl}. 

\subsection{Analysis}
For shorthand notation, denote $P:=\{1,\dots,p\}$. 
We say a feasible point $\xb$ is a \emph{Fritz-John point} provided there exist multipliers $(y^f,y^c)\geq0$ satisfying
\begin{equation}\label{eq.FritzJohn}
  y^f\nabla f(\xb)+\sum_{i\in P}y_i^c\nabla c_i(\xb)=0,
  \qquad y^f+\mathbf1^Ty^c=1,
  \qquad y_i^c c_i(\xb)=0\quad(i\in P).
\end{equation}
We propose an exact criticality measure 
\begin{equation}\label{eq.chi_def}
  \chi(\xb,\mathcal A):=-\min_{\stepb\in\mathcal B(0,1)}
  \max\left\{\nabla f(\xb)^T\stepb,
    \max_{i\in\mathcal A}\big(c_i(\xb)+\nabla c_i(\xb)^T\stepb\big)\right\},
\end{equation}
with the inner maximum omitted when $\mathcal A=\emptyset$. 
As further shorthand notation, we will denote the simplex of weights corresponding to a set $\mathcal{A}\subseteq P$ via 
$$\Sigma_{\mathcal A}:=\{(y^f,y^c)\geq0:y^f+\sum_{i\in\mathcal A}y_i^c=1\}.$$
We will first demonstrate that \eqref{eq.chi_def} properly defines a criticality measure, as intended. 
\begin{proposition}\label{prop.criticality}
  Suppose \Cref{assm:smoothness} holds. For any $\xb\in\mathbb{R}^n$ and $\mathcal A\subseteq P$,
  \begin{equation}\label{eq.dual_chi}
    \chi(\xb,\mathcal A)=\min_{y\in\Sigma_{\mathcal A}}
    \left\{\left\|y^f\nabla f(\xb)+\sum_{i\in\mathcal A}y_i^c\nabla c_i(\xb)\right\|_1
      -\sum_{i\in\mathcal A}y_i^c c_i(\xb)\right\}.
  \end{equation}
  The function $\chi(\cdot,\mathcal A)$ is continuous, and both $\chi$ and $\chi_k$ are nonincreasing when we take supersets of $\mathcal A$. 
  For feasible $\xb$ with $\mathcal A(\xb)\subseteq\mathcal A$, one has $\chi(\xb,\mathcal A)=0$ if and only if $\xb$ is a Fritz-John point. 
  If, in addition, \Cref{assm:interpolation} holds, then
  \begin{equation}\label{eq.chi_error}
    |\chi_k(\xb_k,\mathcal A)-\chi(\xb_k,\mathcal A)|\leq\kappa_\chi\Delta_k,
    \qquad
    \kappa_\chi:=\sqrt n\max\{\kappa_{\nabla f},\max_{i\in P}\kappa_{\nabla c_i}\}.
  \end{equation}
\end{proposition}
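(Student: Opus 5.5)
The plan is to obtain the dual representation \eqref{eq.dual_chi} by a minimax argument and then derive the remaining claims from it. Rewrite the inner maximum in \eqref{eq.chi_def} as a maximum over the simplex: for any reals $a_0,a_i$ one has $\max\{a_0,\max_{i\in\mathcal A}a_i\}=\max_{y\in\Sigma_{\mathcal A}}\{y^fa_0+\sum_{i\in\mathcal A}y_i^ca_i\}$. This gives
\[
\chi(\xb,\mathcal A)=-\min_{\stepb\in\mathcal B(0,1)}\max_{y\in\Sigma_{\mathcal A}}\Big\{\big(y^f\nabla f(\xb)+\textstyle\sum_{i\in\mathcal A}y_i^c\nabla c_i(\xb)\big)^T\stepb+\sum_{i\in\mathcal A}y_i^cc_i(\xb)\Big\}.
\]
The objective is bilinear, and both $\mathcal B(0,1)$ (the $\ell_\infty$ unit box) and $\Sigma_{\mathcal A}$ are compact and convex. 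Von Neumann's minimax theorem therefore lets us swap min and max. For fixed $y$, the minimum of $v^T\stepb$ over the $\ell_\infty$ unit box is $-\|v\|_1$, since the $\ell_1$ norm is dual to $\ell_\infty$. Negating yields \eqref{eq.dual_chi}. When $\mathcal A=\emptyset$ the simplex reduces to $y^f=1$, which recovers $\|\nabla f(\xb)\|_1$ consistently.

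Next come continuity and monotonicity.
\begin{itemize}
\item \emph{Continuity.} The function inside the min-max in \eqref{eq.chi_def} is jointly continuous in $(\xb,\stepb)$, because $\nabla f$ and $\nabla c_i$ are continuous by \Cref{assm:smoothness}. The min over the compact box is then continuous in $\xb$ by a standard Berge-type argument. Alternatively, one can note that the map from the data $(\nabla f,\nabla c_i,c_i)$ to the value of \eqref{eq.chi_def} is Lipschitz: the inner max of affine functions changes by at most $\max(\|\Delta g\|_1+|\Delta c|)$ uniformly over $\stepb$ in the box.
\item \emph{Monotonicity.} Enlarging $\mathcal A$ enlarges the inner maximum pointwise in $\stepb$, so the min increases and $\chi$ decreases. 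The same argument applies verbatim to $\chi_k$.
\end{itemize}

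For the Fritz-John characterization, fix a feasible $\xb$ with $\mathcal A(\xb)\subseteq\mathcal A$. Feasibility gives $c_i(\xb)\le0$, so both terms of the dual objective are nonnegative. Hence $\chi=0$ if and only if some $y\in\Sigma_{\mathcal A}$ (the minimum is attained by compactness) satisfies $y^f\nabla f+\sum y_i^c\nabla c_i=0$ and $y_i^cc_i(\xb)=0$ for all $i\in\mathcal A$. Extending $y^c$ by zeros outside $\mathcal A$ produces multipliers satisfying \eqref{eq.FritzJohn}. Conversely, given Fritz-John multipliers, complementarity forces $y_i^c=0$ whenever $c_i(\xb)<0$. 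In particular $y_i^c=0$ for every $i\notin\mathcal A(\xb)\supseteq P\setminus\mathcal A$, so the multiplier lies in $\Sigma_{\mathcal A}$ and the dual objective vanishes there, giving $\chi=0$. This step is where the hypothesis $\mathcal A(\xb)\subseteq\mathcal A$ is used.

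Finally, for \eqref{eq.chi_error}, compare the primal forms \eqref{eq.chik_def} and \eqref{eq.chi_def}, which share the constant terms $c_i(\xb_k)$. For every $\stepb$ with $\|\stepb\|_\infty\le1$, each affine piece differs by $|(g-\nabla)^T\stepb|\le\|g-\nabla\|_1\le\sqrt n\|g-\nabla\|_2\le\sqrt n\,\kappa\Delta_k$, using \Cref{assm:interpolation} at $\xb=\xb_k$. A max of pieces each perturbed by at most $\epsilon$ is perturbed by at most $\epsilon$, and a min over $\stepb$ preserves this bound, which gives $\kappa_\chi\Delta_k$.

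The only nonroutine step is the minimax exchange; compactness and convexity of both sets make it standard. The bookkeeping of multiplier supports in the Fritz-John equivalence is the other point requiring care.
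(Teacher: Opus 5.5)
Your proposal is correct and follows essentially the same route as the paper: rewrite the inner max as a max over $\Sigma_{\mathcal A}$, swap min and max by minimax duality, and use $\ell_1$/$\ell_\infty$ norm duality to obtain \eqref{eq.dual_chi}. The remaining claims then come, as in the paper, from nonnegativity of both dual terms at feasible points, compactness (continuity), enlarging the inner max (monotonicity), and a uniform $\sqrt n\,\kappa\Delta_k$ bound on each affine piece preserved under max and min. One cosmetic fix: the inclusion you need is $P\setminus\mathcal A\subseteq P\setminus\mathcal A(\xb)$, whereas your phrasing literally reads $\mathcal A(\xb)\supseteq P\setminus\mathcal A$.
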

\begin{proof}
 We can equivalently express \eqref{eq.chi_def} as 
 \begin{equation}\label{eq.chi_rewrite}
  \chi(\xb,\mathcal A):=-\min_{\stepb\in\mathcal B(0,1)}
  \max_{\yb\in\Sigma_{\mathcal A}} 
  \left\{[0; c_{\mathcal{A}}(\xb)]^\top \yb + \stepb^\top [\nabla f(\xb);  \nabla c_{\mathcal{A}}(\xb)] \yb
 \right\}.
\end{equation}
 This reformulation of~\eqref{eq.chi_def} is affine in $\yb$ and $\stepb$, each on a respective compact convex set, and so minimax duality permits interchanging the minimization and maximization in \eqref{eq.chi_rewrite}.
  Minimizing the linear function of $\stepb$ in~\eqref{eq.chi_rewrite} over the unit ball yields~\eqref{eq.dual_chi}. 
  At feasible $\xb$, both terms in the objective of~\eqref{eq.dual_chi} are nonnegative; a zero minimum is therefore equivalent to~\eqref{eq.FritzJohn}, with multipliers extended by zero outside $\mathcal A$. Conversely, every Fritz-John multiplier is supported on $\mathcal A(\xb)\subseteq\mathcal A$.

  Continuity follows from minimization of a jointly continuous function over a fixed compact set; monotonicity follows by enlarging the inner maximum in~\eqref{eq.chi_def}. For every $\stepb\in\mathcal B(0,1)$, full linearity bounds the perturbation of each affine term by $\kappa_\chi\Delta_k$. Taking maxima and then minima preserves this uniform bound, proving~\eqref{eq.chi_error}.
\end{proof}

For proving global convergence results, it suffices to consider $\widehat\chi(\xb):=\chi(\xb,P)$. 
In particular, due to \Cref{prop.criticality}, uniformly over any $\mathcal{A}\subseteq P$, 
\begin{equation}\label{eq.full_criticality}
  \chi_k(\xb_k,\mathcal A)\geq\widehat\chi(\xb_k)-\kappa_\chi\Delta_k.
\end{equation}
On a fixed iteration $k$, the bound \eqref{eq.full_criticality} applies after any model updates (due to the conservatism of $\kappa_\chi$) and active-set augmentations (due to $\mathcal{A}\subseteq P$).
 In what follows, $\Delta_k$ and $\Ak$ denote their current values within iteration $k$; on completed iterations, $\Delta_k$ denotes the radius used in the subproblem \eqref{eq:step_subproblem}.
 
 In our next result, we show that \Cref{alg.main_alg_chi} is well-defined in the sense that no inner loop can continue indefinitely.
\begin{lemma}\label{lem.well-defined}\label{lem.cauchy}\label{lem.subproblems}
 Suppose \Cref{assm:smoothness,assm:interpolation,assm:bounded_hessian} hold. For all $k$,
 \begin{enumerate}
 \item $\xb_{k+1}$ is feasible and $\Delta_k>0$.
\item If $\xb_k$ is not a Fritz-John point, then the criticality loop beginning in \Algline{alg.main_alg_chi}{step.criticality} terminates finitely.
\item The while loop beginning in \Algline{alg.main_alg_chi}{step.mainwhile} terminates after at most $p$ passes.
\item If \Algline{alg.main_alg_chi}{step.sk} is reached, then \eqref{eq:step_subproblem} admits a unique primal solution $\stepb_k$.
\item Writing $\chi_k:=\chi_k(\xb_k,\Ak)$, if \Algline{alg.main_alg_chi}{step.sk} is reached, then $\chi_k\geq\Delta_k/\mu>0$ and $\stepb_k$ satisfies
  \begin{equation}\label{eq.starpre}
    m^f(\xb_k)-m^f(\xb_k+\stepb_k)\geq-\tau_k
    \geq\tfrac12\chi_k\min\{1,\Delta_k,\chi_k/(\overline\zeta n)\}>0.
  \end{equation}
  \end{enumerate}
\end{lemma}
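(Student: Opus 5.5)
I would prove the five items essentially in order, since each relies on the algorithm's bookkeeping plus \Cref{prop.criticality} and \eqref{eq.full_criticality}.

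\emph{Item 1} is an induction on $k$. The incumbent changes only in \Algline{alg.main_alg_chi}{line:very_successful} or \Algline{alg.main_alg_chi}{line:successful}, and those lines are reached only when $i_{\mathrm{viol}}=0$, that is, after every $c_i(\xb_k+\stepb_k)\le0$ has been checked. All other branches set $\xb_{k+1}=\xb_k$, which is feasible by induction from $c(\xb_0)\le0$. The radii are only ever multiplied by $\gammad>0$ or $\gammai\ge1$, or maxed with $\underline\Delta>0$, so $\Delta_k>0$.

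\emph{Item 2.} Suppose the criticality loop never exits. Then $\Delta_k\to0$ geometrically, while $\Delta_k>\mu\chi_k(\xb_k,\Ak)\ge\mu(\widehat\chi(\xb_k)-\kappa_\chi\Delta_k)$ by \eqref{eq.full_criticality}. Letting $\Delta_k\to0$ gives $\widehat\chi(\xb_k)=0$. Since $\xb_k$ is feasible and $\mathcal A(\xb_k)\subseteq P$, \Cref{prop.criticality} says $\xb_k$ is Fritz-John, which contradicts the hypothesis.

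\emph{Item 3.} Each pass through the main loop either exits to iteration $k+1$ or strictly enlarges $\Ak$ by an index $i_{\mathrm{viol}}\notin\Ak$. Since $|P|=p$, there can be at most $p$ enlargements. The phrase ``at most $p$ passes'' needs a little care with counting: once $\Ak=P$, the expansion branch cannot fire, so the next pass exits. I would phrase the count so that it matches the statement, or note that $\Ak$ is nonempty whenever an expansion occurs.

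\emph{Item 4.} Eliminate $\tau$: the subproblem becomes minimizing
\[
\max\Bigl\{{g_k^f}^T\stepb,\ \max_{i\in\Ak}\bigl(c_i(\xb_k)+{g_k^{c_i}}^T\stepb\bigr)\Bigr\}+\tfrac12\stepb^TH_k\stepb
\]
over the compact box. This is convex plus strongly convex (by \Cref{assm:bounded_hessian}) and continuous, so the minimizer $\stepb_k$ exists and is unique, and $\tau_k$ equals the max term at $\stepb_k$.

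\emph{Item 5} is the main work. The inequality $\chi_k\ge\Delta_k/\mu$ holds because the criticality test is false on exit, and is still true after any augmentation because of the check in \Algline{alg.main_alg_chi}{step.critical_verify}. The first inequality in \eqref{eq.starpre} is the constraint $m^f(\xb_k)-m^f(\xb_k+\stepb_k)=-{g_k^f}^T\stepb_k\ge-\tau_k$.

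For the Cauchy-type bound, let $\phi(\stepb)$ denote the max term. Pick $\hat\stepb\in\mathcal B(0,1)$ attaining $\phi(\hat\stepb)=-\chi_k$, and consider $t\hat\stepb$ for $t\in[0,\min\{1,\Delta_k\}]$. Because $\phi$ is convex and $\phi(0)=\max\{0,\max_{i\in\Ak}c_i(\xb_k)\}=0$ (using feasibility of $\xb_k$; the $f$ term gives $0$), we get $\phi(t\hat\stepb)\le-t\chi_k$. We also have $\tfrac12 t^2\hat\stepb^TH_k\hat\stepb\le\tfrac12\overline\zeta n t^2$, since $\|\hat\stepb\|_2^2\le n$. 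Hence
\[
\tau_k+\tfrac12\stepb_k^TH_k\stepb_k\le\min_{t}\bigl(-t\chi_k+\tfrac12\overline\zeta nt^2\bigr).
\]
Choosing $t=\min\{1,\Delta_k,\chi_k/(\overline\zeta n)\}$ gives the value $\le-\tfrac12 t\chi_k$. Since $H_k\succ0$, $-\tau_k\ge-\tau_k-\tfrac12\stepb_k^TH_k\stepb_k\ge\tfrac12\chi_k t$, which is positive because $\chi_k>0$.

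The one delicate point I anticipate is the convexity step $\phi(0)=0$. It requires $c_i(\xb_k)\le0$, which holds by Item 1, and it also requires that the $f$ piece makes the maximum at $0$ exactly $0$.
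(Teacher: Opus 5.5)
Your proposal is correct and follows the paper's proof essentially step for step. Your convexity bound $\phi(t\hat\stepb)\le -t\chi_k$ is exactly the paper's observation that $(t\hat\stepb,-t\chi_k)$ is feasible for \eqref{eq:step_subproblem}. Your elimination of $\tau$ in Item 4 is a cleaner justification than the paper's one-line appeal to a strictly convex objective over a compact set: in the variables $(\stepb,\tau)$ the objective is only linear in $\tau$ and the feasible set is unbounded.

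Your caution in Item 3 is also warranted. If $\Ak=\emptyset$ on entry to the main loop, $p$ enlargements followed by a final pass give $p+1$ passes. So the algorithm's logic really yields at most $p$ enlargements, that is, at most $p-|\Ak|+1$ passes with $\Ak$ taken at entry. Noting that $\Ak$ is nonempty after an expansion does not remove that extra pass, though only finiteness is used later, so nothing downstream is affected.
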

\begin{proof}
The first and third claim are both immediate by the logic of the algorithm. 

  Let $\xb_k$ be a feasible incumbent. 
 If the loop beginning in \Algline{alg.main_alg_chi}{step.criticality} repeats infinitely, then $\Delta_k\to0$, and the loop condition and~\eqref{eq.full_criticality} together give
  $0\leq\widehat\chi(\xb_k)<(\mu^{-1}+\kappa_\chi)\Delta_k$.
  Thus $\xb_k$ is a Fritz-John point by \Cref{prop.criticality}, proving the second claim. 

The subproblem~\eqref{eq:step_subproblem} minimizes a strictly convex function over a compact set.
 Observing that $(\mathbf{0},0)$ is feasible for~\eqref{eq:step_subproblem}, we conclude that the minimizer $\stepb_k$ exists and is unique, proving the fourth claim. 

  Now, if the loop beginning in \Algline{alg.main_alg_chi}{step.criticality} terminates (so $\xb_k$ is not a Fritz-John point) then the loop's termination condition, and the check in \Algline{alg.main_alg_chi}{step.critical_verify}, ensure $\chi_k\geq\Delta_k/\mu>0$ every time \Algline{alg.main_alg_chi}{step.sk} is reached.
  Let $\hat\stepb$ be a unique minimum defining $\chi_k$ and set
  $t:=\min\{1,\Delta_k,\chi_k/(\overline\zeta n)\}>0$.
  Since $c_i(\xb_k)\leq0$ and $t\leq1$, the pair $(t\hat\stepb,-t\chi_k)$ is feasible for~\eqref{eq:step_subproblem}. 
  The optimality of $\stepb_k$ and the Hessian bound in \Cref{assm:bounded_hessian} together yield
  \[
    \tau_k+\tfrac12\stepb_k^TH_k\stepb_k
      \leq-t\chi_k+\tfrac12\overline\zeta n t^2
      \leq-\tfrac12t\chi_k.
  \]
  The constraints of \eqref{eq:step_subproblem} then yield~\eqref{eq.starpre}, and so $\stepb_k\neq0$ and the acceptance ratio is well defined, proving the fifth claim. 
\end{proof}

We now show that when we enter the loop beginning in \Algline{alg.main_alg_chi}{step.mainwhile} with sufficiently small $\Delta_k$, and if $\xb_k$ is not a Fritz-John point, then the trial step $\xb_k+\stepb_k$ generated at the end of the loop must be accepted.

\begin{lemma}\label{lem.local}
Suppose \Cref{assm:smoothness,assm:interpolation,assm:bounded_hessian} hold, and let $\bar\xb$ be feasible but not a Fritz-John point.
There exist $\epsilon,a,\Delta^*>0$ such that, provided $\xb_k$ satisfies $\|\xb_k-\bar\xb\|_\infty\leq\epsilon$ 
and %
$\Delta_k<\Delta^*$, 
then
 $\mathcal{A}_k$ satisfies
  $\chi_k(\xb_k,\mathcal A_k)\geq a$ and $\Delta_k\leq\mu\chi_k(\xb_k,\mathcal A_k)$.
  If the loop beginning in \Algline{alg.main_alg_chi}{step.mainwhile} is reached with $\Delta_k <\Delta^*$, then the $k$th iteration ends with an accepted step $\xb_{k+1} = \xb_k + \stepb_k$ and the predicted decrease satisfies
   $m^f(\xb_k) - m^f(\xb_{k+1}) \geq a\Delta_k/2$.
\end{lemma}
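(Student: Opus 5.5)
The plan has two parts. First, a neighbourhood argument transfers non-criticality of $\bar\xb$ to $\xb_k$ through continuity of $\widehat\chi$ and the error bound \eqref{eq.full_criticality}. Second, a single-iteration argument shows that every trial point generated with a small radius is feasible for the constraints in $\Ak$ and satisfies the ratio test.

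\textbf{Step 1: lower bound on the criticality measure.} Since $\bar\xb$ is feasible, $\mathcal A(\bar\xb)\subseteq P$ holds trivially. Because $\bar\xb$ is not a Fritz-John point, \Cref{prop.criticality} gives $\widehat\chi(\bar\xb)=\chi(\bar\xb,P)>0$. Set $a:=\widehat\chi(\bar\xb)/4$.
\begin{itemize}
\item By continuity of $\chi(\cdot,P)$, choose $\epsilon>0$ so that $\widehat\chi(\xb)\geq 2a$ whenever $\|\xb-\bar\xb\|_\infty\leq\epsilon$.
\item Incumbents are feasible by \Cref{lem.well-defined}. For any $\mathcal A\subseteq P$, including any augmented estimate $\tilde{\mathcal A}_k$, \eqref{eq.full_criticality} gives $\chi_k(\xb_k,\mathcal A)\geq 2a-\kappa_\chi\Delta_k\geq a$ once $\Delta_k\leq a/\kappa_\chi$.
\item Requiring also $\Delta^*\leq\mu a$ yields $\Delta_k\leq\mu a\leq\mu\chi_k(\xb_k,\mathcal A)$.
\end{itemize}
This proves the first claim. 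It also shows that, in the main loop, the verification test in \Algline{alg.main_alg_chi}{step.critical_verify} always succeeds. Since $\xb_k$ is fixed within the iteration and $\Delta_k$ only decreases, these bounds persist throughout the iteration.

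\textbf{Step 2: size of the step.} Add $\Delta^*\leq\min\{1,a/(\overline\zeta n)\}$. Then \eqref{eq.starpre} with $\chi_k\geq a$ gives
\[
  m^f(\xb_k)-m^f(\xb_k+\stepb_k)\;\geq\;-\tau_k\;\geq\;\tfrac12\chi_k\min\{1,\Delta_k,\chi_k/(\overline\zeta n)\}\;\geq\;a\Delta_k/2 .
\]
This bound holds on every pass of the main loop and already gives the claimed predicted decrease.

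\textbf{Step 3: feasibility of the trial point (main obstacle).} This is where the algorithm's branching must be handled carefully.
\begin{itemize}
\item For each $i\in\Ak$, full linearity and the subproblem constraint give $c_i(\xb_k+\stepb_k)\leq m^{c_i}(\xb_k+\stepb_k)+\kappa_{c_i}\Delta_k^2\leq -a\Delta_k/2+\kappa_{c_i}\Delta_k^2<0$. This uses $\Delta^*<a/(2\max_i\kappa_{c_i})$.
\item Hence any violated index satisfies $i_{\mathrm{viol}}\notin\Ak$, and $\stepb_k\neq0$ by \Cref{lem.well-defined}.
\item Consequently the rejection branch at \Algline{alg.main_alg_chi}{line:rejected_viol} is never taken. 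The only remaining outcome is augmentation, which succeeds by Step~1. The model of the new constraint is made fully linear, so the bound in the first item applies to it on later passes.
\end{itemize}
Each augmentation adds a new index, so after at most $p$ passes (cf.\ \Cref{lem.well-defined}) the trial point violates no constraint. The branch in \Algline{alg.main_alg_chi}{line:iviol_zero} is then reached.

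\textbf{Step 4: ratio test.} At that branch, $m^f(\xb_k)=f(\xb_k)$ and $|f(\xb_k+\stepb_k)-m^f(\xb_k+\stepb_k)|\leq\kappa_f\Delta_k^2$. Combined with Step~2,
\[
  |1-\rho_k|\;\leq\;\frac{\kappa_f\Delta_k^2}{a\Delta_k/2}\;=\;\frac{2\kappa_f\Delta_k}{a}.
\]
Imposing $\Delta^*\leq(1-\eta_1)a/(2\kappa_f)$ gives $\rho_k\geq\eta_1$, so the step is accepted and $\xb_{k+1}=\xb_k+\stepb_k$.

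\textbf{Choice of $\Delta^*$.} Take $\Delta^*$ to be the minimum of all thresholds imposed above:
\[
  \Delta^*:=\min\Big\{\tfrac{a}{\kappa_\chi},\;\mu a,\;1,\;\tfrac{a}{\overline\zeta n},\;\tfrac{a}{2\max_i\kappa_{c_i}},\;\tfrac{(1-\eta_1)a}{2\kappa_f}\Big\},
\]
with the fourth threshold taken strictly (e.g.\ by halving it) so that the strict inequality in Step~3 holds.
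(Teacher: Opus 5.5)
Your proposal is correct and follows the paper's proof essentially step for step. It uses the same continuity argument giving $\widehat\chi\geq 2a$ near $\bar\xb$, and applies \eqref{eq.full_criticality} to every $\mathcal A\subseteq P$ so the augmentation check always passes. The full-linearity bound showing that constraints in $\Ak$ cannot be violated, the ratio estimate, and the choice of $\Delta^*$ are also the same; the paper merely phrases the pass-by-pass argument as an explicit induction. One nit: no halving of any threshold is needed, since $\Delta_k<\Delta^*$ is already strict, and the threshold you have in mind is the fifth in your list, not the fourth.
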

\begin{proof}
  By \Cref{prop.criticality}, there exist $a,\epsilon>0$ so that $\widehat\chi(\xb)\geq2a$ on $\mathcal B(\bar\xb,\epsilon)$. 
  Let $\Delta^*>0$ satisfy
  \[
    \Delta^*\leq\min\left\{1,\frac a{\kappa_\chi},\frac a{\overline\zeta n},
    \mu a,\frac{(1-\eta_1)a}{2\kappa_f},
    \min_{i\in P}\frac a{2\kappa_{c_i}}\right\}.
  \]
  Recall that Equation~\eqref{eq.full_criticality} holds for any $\mathcal{A}\subseteq P$.
 Thus, whenever the models are fully linear on $\mathcal{B}(\xb_k,\Delta_k)$ with $\Delta_k<\Delta^*$, we have that $\chi_k(\xb_k,\mathcal A)
        \geq\widehat\chi(\xb_k)-\kappa_\chi\Delta_k
         \geq2a-\kappa_\chi\Delta_k\geq a,$ and so 
 $\Delta_k<\Delta^*\leq\mu a\leq\mu\chi_k(\xb_k,\mathcal A).$
  Letting $\mathcal A=\Ak$, we prove the first claim. 

  Suppose now that the loop beginning in \Algline{alg.main_alg_chi}{step.mainwhile} is reached.
   Index its successive passes by $j=0,1,\dots$, writing $\Ak^{(j)}$, $(\stepb_k^{(j)},\tau_k^{(j)})$ for the working set and subproblem solution at pass $j$, respectively. 
   Both $\xb_k$ (feasible, by \Cref{lem.well-defined}) and $\Delta_k$ remain fixed during this loop.

  At $j=0$, the proven first claim gives $\chi_k(\xb_k,\Ak^{(0)})\geq a$. Since $\Delta_k<\Delta^*\leq\min\{1,a/(\overline\zeta n)\}$, the minimum in~\eqref{eq.starpre} is $\Delta_k$. Combining that inequality with the objective and constraint inequalities of~\eqref{eq:step_subproblem} yields
  \begin{equation}\label{eq.local_pass_bounds}
    \begin{aligned}
      m^f(\xb_k)-m^f(\xb_k+\stepb_k^{(j)})
        &\geq-\tau_k^{(j)}\geq\tfrac12a\Delta_k,\\
      m^{c_i}(\xb_k+\stepb_k^{(j)})
        &\leq\tau_k^{(j)}\leq-\tfrac12a\Delta_k
        \qquad(i\in\Ak^{(j)}),
    \end{aligned}
  \end{equation}
  initially for $j=0$. In particular, $\stepb_k^{(0)}\neq0$.

  Suppose \eqref{eq.local_pass_bounds} held at pass $j$; as an induction, we will prove that \eqref{eq.local_pass_bounds} holds at the $(j+1)$th pass.
   Since $\|\stepb_k^{(j)}\|_\infty\leq\Delta_k$, full linearity (\Cref{assm:interpolation}) gives, for every $i\in\Ak^{(j)}$,
  \begin{equation}\label{eq:feas_bound}
    c_i(\xb_k+\stepb_k^{(j)})
    \leq m^{c_i}(\xb_k+\stepb_k^{(j)})+\kappa_{c_i}\Delta_k^2
    \leq-\tfrac12a\Delta_k+\kappa_{c_i}\Delta_k^2\leq0,
  \end{equation}
  where the last inequality uses $\Delta_k<\Delta^*\leq a/(2\kappa_{c_i})$.
  Consequently, any violated index $i_{\mathrm{viol}}$ in \Algline{alg.main_alg_chi}{line:set_iviol} must belong to $P\setminus\Ak^{(j)}$. 
  Since~\eqref{eq.local_pass_bounds} also implies $\stepb_k^{(j)}\neq0$, the algorithm will then set 
   $\widetilde{\mathcal A}_k=\Ak^{(j)}\cup\{i_{\mathrm{viol}}\}$.
  After the model of $c_{i_{\mathrm{viol}}}$ is built or updated, it is fully linear on $\mathcal{B}(\xb_k,\Delta_k)$. 
  Applying~\eqref{eq.full_criticality} to the updated models therefore gives
  \[
    \begin{aligned}
      \chi_k(\xb_k,\widetilde{\mathcal A}_k)
        &\geq\widehat\chi(\xb_k)-\kappa_\chi\Delta_k\geq a,\\
      \Delta_k&<\Delta^*\leq\mu a
        \leq\mu\chi_k(\xb_k,\widetilde{\mathcal A}_k).
    \end{aligned}
  \]
  Thus the check in \Algline{alg.main_alg_chi}{step.critical_verify} passes and the algorithm sets $\Ak^{(j+1)}=\widetilde{\mathcal A}_k$. 
Thus,~\eqref{eq.local_pass_bounds} holds in the $(j+1)$th pass.

By \Cref{lem.well-defined}, there can be at most $p-|\Ak^{(0)}|$ many passes through the loop beginning in \Algline{alg.main_alg_chi}{step.mainwhile}.
The bound \eqref{eq:feas_bound} applies to every constraint; hence some pass $j_*$ must produce a feasible trial point. 
After the $j_*$th pass, we have that $\stepb_k=\stepb_k^{(j_*)}$. 
Using $m^f(\xb_k)=f(\xb_k)$, full linearity of the objective model, and~\eqref{eq.local_pass_bounds}, we obtain
  \[
    |\rho_k-1|
      =\frac{|f(\xb_k+\stepb_k)-m^f(\xb_k+\stepb_k)|}
             {m^f(\xb_k)-m^f(\xb_k+\stepb_k)}
      \leq\frac{\kappa_f\Delta_k^2}{a\Delta_k/2}
      =\frac{2\kappa_f\Delta_k}{a}\leq1-\eta_1.
  \]
  Thus $\rho_k\geq\eta_1$, so this nonzero step is accepted and its predicted decrease is at least $a\Delta_k/2$, as claimed.
\end{proof}

\begin{theorem}[Global convergence]\label{thm.main}
  Under \Cref{assm:smoothness,assm:interpolation,assm:bounded_hessian,assm:bounded_below}, either some $\xb_k$ is a Fritz-John point, or else \Cref{alg.main_alg_chi} generates infinitely many nonzero accepted steps and every limit point of $\{\xb_k\}$ is a Fritz-John point.
\end{theorem}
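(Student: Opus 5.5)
The plan is to argue by contradiction in two stages, assuming throughout that no $\xb_k$ is a Fritz-John point. By \Cref{lem.well-defined}, every criticality loop and every main loop then terminates finitely, every incumbent is feasible, and every completed iteration does exactly one of two things. Either it ends with an accepted step (\Algline{alg.main_alg_chi}{line:very_successful} or \Algline{alg.main_alg_chi}{line:successful}), or it sets $\xb_{k+1}=\xb_k$ and $\Delta_{k+1}=\gammad\Delta_k$. Since the criticality loop only shrinks $\Delta_k$, the radius is multiplied by at most $\gammad$ on every iteration without an accepted step. The other tool I will use repeatedly is the decrease bound on accepted steps. Combining $\rho_k\geq\eta_1$ with \eqref{eq.starpre} gives
\[
f(\xb_k)-f(\xb_{k+1})\geq\tfrac{\eta_1}{2}\chi_k\min\{1,\Delta_k,\chi_k/(\overline\zeta n)\},\qquad \chi_k\geq\Delta_k/\mu .
\]

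\emph{Step 1: infinitely many accepted steps.} Suppose only finitely many steps are accepted. Then $\xb_k=\bar\xb$ for all large $k$, where $\bar\xb$ is feasible and, by assumption, not a Fritz-John point. The radius is reduced by at least the factor $\gammad$ on every one of these iterations, so $\Delta_k\to0$. Eventually $\Delta_k<\Delta^*$, with $\Delta^*$ from \Cref{lem.local} applied at $\bar\xb$. By the first claim of that lemma the criticality loop does not drive the radius further down, and the main loop is reached with $\Delta_k<\Delta^*$. The lemma then says the iteration ends with an accepted step, which is a contradiction. Every accepted step is nonzero by \eqref{eq.starpre}.

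\emph{Step 2: every limit point is Fritz-John.} Let $\xb_{k_j}\to\bar\xb$ and suppose $\bar\xb$ is not a Fritz-John point; $\bar\xb$ is feasible by continuity. Take $\epsilon,a,\Delta^*$ from \Cref{lem.local}. The key claim is that whenever the main loop is reached at an incumbent in $\mathcal B(\bar\xb,\epsilon)$, the radius satisfies $\Delta_k\geq\delta:=\gammad\min\{\Delta^*,\underline\Delta\}$. To prove this, I track the radius since the incumbent last changed.
\begin{itemize}
\item Right after an accepted step, and at $k=0$, the radius is at least $\underline\Delta$.
\item While the incumbent stays fixed in the ball, the radius is shrunk only in two situations. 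The criticality loop shrinks it only while $\Delta_k>\mu\chi_k\geq\Delta^*$, using the first claim of \Cref{lem.local}. A rejection, or a failed check in \Algline{alg.main_alg_chi}{step.critical_verify}, shrinks it only if the main loop was entered with $\Delta_k\geq\Delta^*$; otherwise \Cref{lem.local} forces acceptance.
\item Each shrink is by the factor $\gammad$ starting from a value at least $\min\{\Delta^*,\underline\Delta\}$, so the radius never drops below $\delta$.
\end{itemize}

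Given this lower bound, every accepted step taken from an incumbent in the ball has $\chi_k\geq\delta/\mu$ and $\Delta_k\geq\delta$. Hence it decreases $f$ by at least
\[
c:=\tfrac{\eta_1}{2}\,\tfrac{\delta}{\mu}\min\{1,\delta,\delta/(\mu\overline\zeta n)\}>0 .
\]
For each $j$ with $\xb_{k_j}$ in the ball, the next accepted step after iteration $k_j$ is taken from $\xb_{k_j}$ itself, because the incumbent does not change until a step is accepted, and by Step 1 such a step exists. As $k_j\to\infty$, these accepted steps are infinitely many distinct ones. Since $f$ is monotonically nonincreasing along the incumbents, $f(\xb_k)\to-\infty$. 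This contradicts \Cref{assm:bounded_below}, because all incumbents are feasible.

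The main obstacle is the radius bookkeeping in Step 2. The bound $m^f(\xb_k)-m^f(\xb_{k+1})\geq a\Delta_k/2$ from \Cref{lem.local} is useless by itself unless $\Delta_k$ is bounded away from zero on accepted steps near $\bar\xb$. This is exactly where the reset to $\underline\Delta$ after acceptance, together with the first claim of \Cref{lem.local} (which stops the criticality loop below $\Delta^*$), must be used carefully. The augmentation branch at \Algline{alg.main_alg_chi}{step.critical_verify} also has to be checked: by the induction in the proof of \Cref{lem.local}, its check cannot fail when $\Delta_k<\Delta^*$.
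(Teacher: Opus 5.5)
Your proof is correct and follows the paper's argument: Step 1 uses the same contradiction via \Cref{lem.local}, and Step 2 uses the same radius bookkeeping (the radius is at least $\underline\Delta$ after any acceptance, and shrinks occur only from radii at least $\Delta^*$), which bounds $\Delta_k$ below at incumbents near $\bar\xb$. The only differences are cosmetic: you turn this bound directly into a uniform decrease $c>0$ on infinitely many accepted steps, whereas the paper first uses summability of $D_k$ to get $\Delta_k\to0$ along accepted iterations and contradicts that; and in your criticality-loop bullet, \Cref{lem.local} does not give $\mu\chi_k\geq\Delta^*$ when $\Delta_k\geq\Delta^*$, only (by contraposition) that the loop shrinks the radius solely when $\Delta_k\geq\Delta^*$, which is all you actually use.
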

\begin{proof}
  Suppose no $\xb_k$ is a Fritz-John point.
  Infinitely many steps must be accepted; otherwise $\xb_k$ is eventually fixed while rejections drive $\Delta_k\to 0$, contradicting \Cref{lem.local}.
   Let $\mathcal S$ index the infinitely many iterations $k$ in which a step is accepted.
As shorthand notation, define $D_k:=m^f(\xb_k)-m^f(\xb_k+\stepb_k)$.
By definition of accepted, $f(\xb_k)-f(\xb_{k+1})\geq\eta_1D_k>0$ for $k\in\mathcal S$; $f(\xb_k) - f(\xb_{k+1}) = 0$ for $k\not\in\mathcal S$.
By \Cref{assm:bounded_below}, $f(\xb_k)\geq\underline f$ for all $k$, so
  \begin{equation}\label{eq.sum_model}
    \sum_{k\in\mathcal S}D_k\leq\frac{f(\xb_0)-\underline f}{\eta_1}<\infty,
    \qquad
    D_k\geq\frac{\Delta_k}{2\mu}
       \min\left\{1,\Delta_k,\frac{\Delta_k}{\mu\overline\zeta n}\right\},
  \end{equation}
  where the second inequality follows from~\eqref{eq.starpre} and $\chi_k\geq\Delta_k/\mu$. Summability gives $D_k\to0$ over $k\in\mathcal S$. Since the lower bound in~\eqref{eq.sum_model} is bounded away from zero whenever $\Delta_k$ is bounded away from zero, it follows that
  \begin{equation}\label{eq.Dvanish}
    \Delta_k\to0\qquad\text{as }k\to\infty\text{ over }k\in\mathcal S.
  \end{equation}

  Let $\bar\xb$ be any limit point of $\{\xb_k\}$ implied by \eqref{eq.Dvanish}, necessarily feasible, and suppose towards a contradiction that $\bar\xb$ is not a Fritz-John point. 
  Take $\epsilon,\Delta^*$ from \Cref{lem.local} and define
    $\Delta_{\rm acc}:=\min\{\underline\Delta,\gammad\Delta^*\}>0.$
  We show that every iteration with $\xb_k\in\mathcal B(\bar\xb,\epsilon)$ reaches the loop beginning in \Algline{alg.main_alg_chi}{step.mainwhile} with $\Delta_k\geq\Delta_{\rm acc}$.

  First, denote the radius on entry to the criticality loop beginning in \Algline{alg.main_alg_chi}{step.criticality} by $\Delta_k^{\rm in}$. If $k=0$ or iteration $k-1$ resulted in acceptance, initialization or the reset gives $\Delta_k^{\rm in}\geq\underline\Delta$. Otherwise, $\xb_{k-1}=\xb_k\in\mathcal B(\bar\xb,\epsilon)$ and the preceding iteration resulted in rejection.
  By \Cref{lem.local},  $\Delta_{k-1}\geq\Delta^*$, since $\Delta_{k-1}<\Delta^*$ would have resulted in acceptance. Therefore
  $\Delta_k^{\rm in}=\gammad\Delta_{k-1}\geq\gammad\Delta^*$.
  In either case, $\Delta_k^{\rm in}\geq\Delta_{\rm acc}$.

\Cref{lem.local} ensures that the criticality loop ends when $\Delta_k<\Delta^*$. 
Thus, any reduction of $\Delta_k$ performed by the criticality loop started from $\Delta_k > \Delta^*$. 
If no reduction occurs, the criticality loop ends with $\Delta_k^{\rm in}$; 
otherwise, the last reduction results in $\Delta_k\geq\gammad\Delta^*$. Consequently, the terminal radius, which is also the radius used throughout the loop beginning in \Algline{alg.main_alg_chi}{step.mainwhile}, satisfies $\Delta_k\geq\Delta_{\rm acc}$.

  Finally, let $k_\ell\to\infty$ with $\xb_{k_\ell}\to\bar\xb$, and let
  $j_\ell:=\min\{j\in\mathcal S:j\geq k_\ell\}$, which exists because $\mathcal S$ is infinite. Every iteration from $k_\ell$ through $j_\ell-1$ rejects its step and leaves the incumbent unchanged, so $\xb_{j_\ell}=\xb_{k_\ell}\to\bar\xb$. Also $j_\ell\geq k_\ell\to\infty$. Hence this subset of $\mathcal{S}$ eventually witnesses both $\xb_{j_\ell}\in\mathcal B(\bar\xb,\epsilon)$ and $\Delta_{j_\ell}\geq\Delta_{\rm acc}>0$, contradicting~\eqref{eq.Dvanish}. Thus every limit point is a Fritz-John point.
\end{proof}

\section{Consequences of Global Convergence}\label{sec.consequences}
By making slightly stronger assumptions, we can additionally demonstrate that \Cref{alg.main_alg_chi} converges to points satisfying Karush-Kuhn-Tucker (KKT) conditions.
Specifically, we say a feasible point is a \emph{KKT point} provided there exists $\mathbf{\lambda}\geq0$ satisfying
$\nabla f(\xb)+\sum_i\lambda_i\nabla c_i(\xb)=0$ and $\lambda_i c_i(\xb)=0$ for all $i$.
Every sequence $\{\xb_k\}$ generated by \Cref{alg.main_alg_chi}, and hence the limit points of any such sequences, belongs to the closed feasible level set
\begin{equation}\label{eq.levelset}
  \mathcal L:=\{\xb:c(\xb)\leq0,\ f(\xb)\leq f(\xb_0)\}.
\end{equation}
Our first assumption simply says that the level set is bounded, while the second is a standard constraint qualification. 
\begin{assumption}\label[assumption]{assm:bounded_levelset}
  The set $\mathcal L$ is bounded.
\end{assumption}
\begin{assumption}\label[assumption]{assm:mfcq}
  At every $\xb\in\mathcal L$, the Mangasarian--Fromovitz constraint qualification (MFCQ) holds: some direction $\db$ satisfies
  $\nabla c_i(\xb)^T\db<0$ for all $i\in\mathcal A(\xb)$.
\end{assumption}

We can now prove an immediate corollary to \Cref{thm.main}, the second part of which shows that $\{\xb_k\}$ generated by \Cref{alg.main_alg_chi} converges to KKT points. 

\begin{corollary}\label{cor.existence}\label{cor.connected}\label{cor.convergence}\label{thm.kkt}
  Under the assumptions of \Cref{thm.main}, the following hold.
  \begin{enumerate}
    \item[(a)] If \Cref{assm:bounded_levelset} holds and no $\xb_k$ is a Fritz-John point, then $f(\xb_k)$ converges and the limit set $\mathcal X$ of $\{\xb_k\}$ is nonempty, compact, connected, and consists entirely of Fritz-John points. If $\mathcal{X}$ contains an isolated point $\bar\xb$, then $\xb_k\to\bar\xb$, that is, $\mathcal X = \{\bar\xb\}.$
    \item[(b)] If \Cref{assm:mfcq} also holds, then either some $\xb_k$ is a KKT point, or else infinitely many nonzero steps are accepted in \Cref{alg.main_alg_chi} and every limit point of $\{\xb_k\}$ is a KKT point. 
  \end{enumerate}
\end{corollary}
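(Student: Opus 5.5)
For part (a), I would begin from \Cref{thm.main}: since no $\xb_k$ is a Fritz-John point, infinitely many nonzero steps are accepted. Every incumbent lies in $\mathcal L$, because accepted steps are feasible (\Cref{lem.well-defined}) and $f$ is nonincreasing along the incumbents. The sequence $\{f(\xb_k)\}$ is nonincreasing and bounded below by \Cref{assm:bounded_below}, so it converges. By \Cref{assm:bounded_levelset}, $\{\xb_k\}$ is bounded. Hence its limit set $\mathcal X$ is nonempty and compact; it is also contained in $\mathcal L$, which is closed by continuity of $f$ and $c$. \Cref{thm.main} then says every point of $\mathcal X$ is Fritz-John.

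For connectedness I would use the classical Ostrowski-type argument: a bounded sequence with $\|\xb_{k+1}-\xb_k\|\to0$ has a connected limit set. So the key step is to show $\|\xb_{k+1}-\xb_k\|_\infty\to0$. For rejected iterations the difference is zero. For accepted ones, $\|\xb_{k+1}-\xb_k\|_\infty=\|\stepb_k\|_\infty\leq\Delta_k$, and \eqref{eq.Dvanish} gives $\Delta_k\to0$ along $\mathcal S$. I expect this to be the only delicate point. One has to check that \eqref{eq.Dvanish} was derived from summability alone in the proof of \Cref{thm.main}, and it was, before the contradiction hypothesis. Once $\|\xb_{k+1}-\xb_k\|_\infty\to0$ holds, the standard argument applies: if $\mathcal X=\mathcal X_1\cup\mathcal X_2$ with disjoint nonempty compact pieces at distance $\delta>0$, the iterates visit the $\delta/3$-neighborhoods of both infinitely often. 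With steps eventually shorter than $\delta/3$, they must infinitely often lie outside both neighborhoods, inside a compact set. This produces a limit point outside $\mathcal X$, a contradiction.

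For the isolated-point claim, I would observe that a connected set containing an isolated point is the singleton $\{\bar\xb\}$. A bounded sequence whose only limit point is $\bar\xb$ converges to $\bar\xb$.

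For part (b), I would show that Fritz-John points in $\mathcal L$ are KKT points under \Cref{assm:mfcq}. Take multipliers $(y^f,y^c)$ from \eqref{eq.FritzJohn}. If $y^f=0$, then $\sum_{i\in\mathcal A(\xb)}y_i^c\nabla c_i(\xb)=0$ with $y^c\neq0$, using complementarity to restrict to active indices. Taking the inner product with the MFCQ direction $\db$ gives a strictly negative number equal to zero, a contradiction. So $y^f>0$, and $\lambda:=y^c/y^f$ is a KKT multiplier. Applying this to the dichotomy of \Cref{thm.main} gives (b). The point $\xb_k$ lies in $\mathcal L$, and limit points lie in $\mathcal L$ since $\mathcal L$ is closed; this part does not require \Cref{assm:bounded_levelset}.
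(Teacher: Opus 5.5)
Your proposal is correct and takes essentially the paper's route. For (a), it uses monotonicity of $f(\xb_k)$ with boundedness of $\mathcal L$, and connectedness follows from $\|\xb_{k+1}-\xb_k\|_\infty\le\Delta_k\to0$ along accepted iterations (by \eqref{eq.Dvanish}) together with Ostrowski's result; for (b), it uses the MFCQ argument forcing $y^f>0$. You also correctly fill in steps the paper leaves implicit: the Ostrowski separation argument, why an isolated point of the connected set $\mathcal X$ forces $\mathcal X=\{\bar\xb\}$, and the observation that (b) needs only closedness of $\mathcal L$, not \Cref{assm:bounded_levelset}.
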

\begin{proof}
  To prove (a), compactness of $\mathcal L$ and the monotonicity ensured by successful steps automatically yield the convergence of $f(\xb_k)$ and a nonempty compact set of limit points $\mathcal{X}$, the elements of which are all Fritz-John points due to \Cref{thm.main}. 
The connectedness of $\mathcal{X}$ follows since \eqref{eq.Dvanish} implies $\|\xb_{k+1}-\xb_k\|_\infty\to0$, and the set of limit points of a bounded sequence with this property is connected~\cite{Ostr66}. 

  To prove (b), consider a Fritz-John point satisfying MFCQ.
  Towards a contradiction, suppose $y^f=0$ in~\eqref{eq.FritzJohn}.
  Then there exists a nonzero, nonnegative linear combination of active constraint gradients equal to the zero vector. 
 The inner product of this linear combination with a direction $\mathbf{d}$ witnessing the MFCQ definition is strictly negative, a contradiction. Thus we must have $y^f>0$ for the Fritz-John point. 
We can then choose $\lambda=y^c/y^f$, where $y^f>0$ and $y^c$ are taken from the Fritz-John definition and observe that $\lambda$ witnesses the definition of a KKT point. 
The result follows from \Cref{thm.main}.
\end{proof}

\section{Active-set identification}\label{sec.identification}
In this section, we will ultimately demonstrate that the mechanism in \eqref{eq:active_subproblem} asymptotically identifies the correct active set at a limit point of $\{\xb_k\}$. 
Throughout this section, \Cref{assm:smoothness,assm:interpolation,assm:bounded_hessian} will hold.

Denote the Lagrangian $L(\xb,\lambda):=f(\xb)+\sum_{i\in P}\lambda_i c_i(\xb)$.
For the results in this section, we will need stronger assumptions than \Cref{assm:mfcq}. 
\begin{assumption}\label[assumption]{assm:nondegeneracy}
  There is a limit point $\bar\xb$ at which the following hold:
  \begin{enumerate}
    \item[(a)] \textbf{LICQ:} the gradients $\{\nabla c_i(\bar\xb):i\in\mathcal A(\bar\xb)\}$ are linearly independent. Provided no $\xb_k$ is a Fritz-John point, \Cref{thm.main} implies $\bar\xb$ a Fritz-John point; hence, LICQ automatically implies $\bar\xb$ is a KKT point with unique multiplier $\bar\lambda$.
    \item[(b)] \textbf{Strict complementarity:} $\bar\lambda_i>0$ for all $i\in\mathcal A(\bar\xb)$.
    \item[(c)] \textbf{Second-order sufficiency:} $f$ and $c_i$ are twice continuously differentiable in an open neighborhood of $\bar\xb$, and
    \[
      \db^T\nabla^2_{\xb\xb}L(\bar\xb,\bar\lambda)\db>0
      \text{ for every }\db\neq0\text{ such that }
      \nabla c_i(\bar\xb)^T\db=0\text{ for all } i\in\mathcal A(\bar\xb).
    \]
  \end{enumerate}
\end{assumption}
The conditions of \Cref{assm:nondegeneracy} make $\bar\xb$ a strict local minimizer. 
As shorthand, denote $\bar{\mathcal A}:=\mathcal A(\bar\xb)$ and $J(\xb):=[\nabla c_i(\xb)^T]_{i\in\bar{\mathcal A}}$. 
In expressions involving $J$, multipliers are restricted to $\bar{\mathcal A}$ and multipliers corresponding to $P\setminus\bar{\mathcal A}$ are interpreted as 0.

We first demonstrate an error bound for the exact criticality measure, and show that $\bar\xb$ is isolated. 
\begin{lemma}\label{lem.error_bound}\label{lem.isolated}
  Under \Cref{assm:nondegeneracy}, there exist $c_\chi,\epsilon_\chi>0$ such that
  \begin{equation}\label{eq.error_bound}
    \chi(\xb,\bar{\mathcal A})\geq c_\chi\|\xb-\bar\xb\|_\infty
    \quad\text{for every feasible }\xb\in\mathcal B(\bar\xb,\epsilon_\chi).
  \end{equation}
  The point $\bar\xb$ is isolated among feasible Fritz-John points. If also \Cref{assm:bounded_levelset} holds and no iterate is a Fritz-John point, then $\xb_k\to\bar\xb$.
\end{lemma}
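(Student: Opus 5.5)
The plan is to prove the error bound~\eqref{eq.error_bound} by contradiction, using the dual representation~\eqref{eq.dual_chi} from \Cref{prop.criticality}. The isolation and convergence claims then follow quickly.

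\emph{Error bound.} Suppose there are feasible $\xb_j\to\bar\xb$, $\xb_j\neq\bar\xb$, with $\chi_j:=\chi(\xb_j,\bar{\mathcal A})=o(t_j)$, where $t_j:=\|\xb_j-\bar\xb\|_\infty$. Pass to a subsequence so that $\db_j:=(\xb_j-\bar\xb)/t_j\to\db$ with $\|\db\|_\infty=1$.

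\begin{enumerate}
\item \emph{Limit of the dual minimizers.} Let $y_j\in\Sigma_{\bar{\mathcal A}}$ attain the minimum in~\eqref{eq.dual_chi}. At feasible points both terms in~\eqref{eq.dual_chi} are nonnegative, and $\chi_j\to0$. Hence every limit of $y_j$ is a Fritz-John multiplier for $\bar\xb$ supported on $\bar{\mathcal A}$. By LICQ it is unique: $y^f=1/(1+\mathbf 1^T\bar\lambda)>0$ and $y^c=y^f\bar\lambda$.
\item \emph{Normalized multipliers.} Set $\lambda_j:=y^c_j/y^f_j$, so that $\lambda_j\to\bar\lambda$. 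Then $\|\nabla_\xb L(\xb_j,\lambda_j)\|_1\le C\chi_j$ and $0\le-\sum_{i\in\bar{\mathcal A}}\lambda_{j,i}c_i(\xb_j)\le C\chi_j$.
\item \emph{Near-activity and the critical cone.} Strict complementarity keeps $\lambda_{j,i}$ bounded away from $0$ for $i\in\bar{\mathcal A}$. Together with $c_i(\xb_j)\le0$, this gives $|c_i(\xb_j)|=O(\chi_j)=o(t_j)$. A first-order Taylor expansion of $c_i(\xb_j)$ about $\bar\xb$ then yields $J(\bar\xb)\db=0$, so $\db$ lies in the cone of \Cref{assm:nondegeneracy}(c).
\item \emph{Multiplier estimate.} Write $J(\xb_j)^T(\lambda_j-\bar\lambda)=\nabla_\xb L(\xb_j,\lambda_j)-\nabla_\xb L(\xb_j,\bar\lambda)$. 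The right-hand side is $O(\chi_j)+O(t_j)$, because $\nabla_\xb L(\bar\xb,\bar\lambda)=0$. LICQ near $\bar\xb$ then gives $\|\lambda_j-\bar\lambda\|=O(t_j)$.
\item \emph{Second-order contradiction.} Expand
\[
\nabla_\xb L(\xb_j,\bar\lambda)^T(\xb_j-\bar\xb)=t_j^2\,\db_j^T\nabla^2_{\xb\xb}L(\bar\xb,\bar\lambda)\db_j+o(t_j^2).
\]
The left-hand side equals $\nabla_\xb L(\xb_j,\lambda_j)^T(\xb_j-\bar\xb)-(\lambda_j-\bar\lambda)^TJ(\xb_j)(\xb_j-\bar\xb)$.
\begin{itemize}
\item The first term is at most $C\chi_j t_j=o(t_j^2)$.
\item For the second term, use $J(\xb_j)(\xb_j-\bar\xb)=c_{\bar{\mathcal A}}(\xb_j)+O(t_j^2)=o(t_j)$. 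Combined with the $O(t_j)$ bound on $\lambda_j-\bar\lambda$, the second term is also $o(t_j^2)$.
\end{itemize}
Dividing by $t_j^2$ and letting $j\to\infty$ gives $\db^T\nabla^2_{\xb\xb}L(\bar\xb,\bar\lambda)\db\le0$, contradicting second-order sufficiency.
\end{enumerate}

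This contradiction establishes the existence of $c_\chi,\epsilon_\chi>0$ in~\eqref{eq.error_bound}. The main obstacle is the cross term $(\lambda_j-\bar\lambda)^TJ(\xb_j)(\xb_j-\bar\xb)$. It requires both the $O(t_j)$ multiplier estimate from LICQ and the near-activity $c_i(\xb_j)=o(t_j)$ from strict complementarity; neither alone makes the term $o(t_j^2)$.

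\emph{Isolation.} Shrink $\epsilon_\chi$ so that $c_i<0$ on $\mathcal B(\bar\xb,\epsilon_\chi)$ for every $i\notin\bar{\mathcal A}$. Then any feasible $\xb$ in that ball satisfies $\mathcal A(\xb)\subseteq\bar{\mathcal A}$. If such an $\xb$ is a Fritz-John point, \Cref{prop.criticality} gives $\chi(\xb,\bar{\mathcal A})=0$, and~\eqref{eq.error_bound} forces $\xb=\bar\xb$.

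\emph{Convergence.} Under \Cref{assm:bounded_levelset}, with no iterate a Fritz-John point, \Cref{cor.existence}(a) shows that the limit set $\mathcal X$ consists of Fritz-John points and contains $\bar\xb$. Since $\bar\xb$ is isolated among feasible Fritz-John points, it is an isolated point of $\mathcal X$. The last statement of \Cref{cor.existence}(a) then gives $\xb_k\to\bar\xb$.
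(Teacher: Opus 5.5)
Your proof is correct, but it reaches the core inequality~\eqref{eq.error_bound} by a different route from the paper.

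\textbf{Where you agree with the paper.} The paper also begins by showing that the dual minimizers of~\eqref{eq.dual_chi} converge to $\bar y=(1,\bar\lambda)/(1+\mathbf 1^T\bar\lambda)$. Your isolation and convergence arguments also coincide with the paper's.

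\textbf{The paper's route.} It does not argue by contradiction. It uses $y^f(\xb)\geq\alpha$ and $\lambda_i(\xb)\geq\beta$ for $i\in\bar{\mathcal A}$ to show
$\chi(\xb,\bar{\mathcal A})\geq\alpha\min\{1,\beta\}\,\psi(\xb,\lambda(\xb))$. Here $\psi$ is the standard KKT residual: the stationarity norm plus $\|\min\{\lambda,-c(\xb)\}\|_1$. The paper then cites the known local KKT error bound $\|\xb-\bar\xb\|_\infty\leq C\psi(\xb,\lambda)$ for $(\xb,\lambda)$ near $(\bar\xb,\bar\lambda)$, from Oberlin--Wright and Curtis--Robinson--Zebiane.

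\textbf{Your route.} You prove the needed error bound from scratch:
\begin{itemize}
\item strict complementarity gives $c_{\bar{\mathcal A}}(\xb_j)=O(\chi_j)=o(t_j)$, which places the limiting direction $\db$ in the subspace on which second-order sufficiency is assumed;
\item LICQ gives the multiplier estimate $\|\lambda_j-\bar\lambda\|=O(t_j)$;
\item together these make the cross term $(\lambda_j-\bar\lambda)^TJ(\xb_j)(\xb_j-\bar\xb)$ of order $o(t_j^2)$, so second-order sufficiency yields the contradiction.
\end{itemize}
You correctly identify that neither ingredient alone suffices for that cross term.

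\textbf{Comparison.} The paper's route is shorter and modular. The only problem-specific step is bounding $\chi$ below by a multiple of $\psi$. There, strict complementarity plays exactly the role it plays for you: it converts the product term $\lambda_i(-c_i(\xb))$ into a linear bound. However, the paper's argument depends on matching the hypotheses of an external theorem, whose formulation is in $(\xb,\lambda)$-space; this is why it needs $\lambda(\xb)\to\bar\lambda$. Your argument is self-contained and shows explicitly where LICQ, strict complementarity, and second-order sufficiency each enter. It verifies directly that the second-order condition is needed only on the equality subspace stated in \Cref{assm:nondegeneracy}(c). The cost is that it is longer and, being a compactness argument, gives no explicit value for $c_\chi$.
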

\begin{proof}
Consider the equivalent definition of $\chi(\xb,\mathcal{A})$ in~\eqref{eq.dual_chi}.
With $\mathcal{A}=\bar{\mathcal A}$,~\eqref{eq.dual_chi} has minimum value zero and unique minimizer attained at
  $\bar y=(1,\bar\lambda)/(1+\mathbf1^T\bar\lambda)$.
  By stationarity and the LICQ, $y^c=y^f\bar\lambda$; the value of $y^f$ is determined by the constraints of $\Sigma_{\bar{\mathcal{A}}}$.
   By continuity and compactness of $\Sigma_{\bar{\mathcal A}}$, the dual minimizers $y(\xb)$ of~\eqref{eq.dual_chi} tends to $\bar y$ as $\xb\to\bar\xb$. Indeed, any limit of such minimizers minimizes the dual objective at $\bar\xb$ and hence equals $\bar y$.

  Thus, there exist $\alpha,\beta>0$ such that $y^f(\xb)\geq\alpha$ and
  $\lambda(\xb):=y^c(\xb)/y^f(\xb)\to\bar\lambda$, with $\lambda_i(\xb)\geq\beta$ for $i\in\bar{\mathcal A}$.
   For feasible $\xb$,~\eqref{eq.dual_chi} thus yields
  \begin{equation}\label{eq.chi_residual}
    \begin{aligned}
      \chi(\xb,\bar{\mathcal A})
      &=y^f(\xb)\big(\|\nabla f(\xb)+J(\xb)^T\lambda(\xb)\|_1
                +\lambda(\xb)^T[-c_{\bar{\mathcal A}}(\xb)]\big)\\
      &\geq\alpha\min\{1,\beta\}\,\psi(\xb,\lambda(\xb)),
    \end{aligned}
  \end{equation}
  where
  $\psi(\xb,\lambda):=\|\nabla f(\xb)+\sum_{i\in P}\lambda_i\nabla c_i(\xb)\|_1
  +\|\min\{\lambda,-c(\xb)\}\|_1$,
  with the minimum taken componentwise (recall our convention that $\lambda_i=0$ for any $i\in P\setminus\bar{\mathcal A}$). 
  If $\bar{\mathcal A}=\emptyset$, take $y^f=1$, $\lambda=\mathbf{0}$ and $\beta=1$.
A well-known local KKT error bound, see e.g.~\cite[Theorem~2.1]{OberWrig06} or~\cite[Theorem~3.1]{CurtRobiZebi25}, applies under \Cref{assm:nondegeneracy} and gives
  $\|\xb-\bar\xb\|_\infty\leq C\psi(\xb,\lambda(\xb))$ near $(\bar\xb,\bar\lambda)$.
  Together with~\eqref{eq.chi_residual}, this proves~\eqref{eq.error_bound}.

  Now, for each $i\notin\bar{\mathcal A}$, $c_i(\bar\xb)<0$. 
  By continuity of $c_i$, choose $0<\epsilon\leq\epsilon_\chi$ such that these constraints are uniformly inactive in $\mathcal B(\bar\xb,\epsilon)$. Hence $\mathcal A(\xb)\subseteq\bar{\mathcal A}$ for all $\xb\in\mathcal{B}(\bar\xb,\epsilon)$.
  If a feasible $\xb\in\mathcal{B}(\bar\xb,\epsilon)$ is a Fritz-John point, choose normalized multipliers $(y^f,y^c)$ satisfying~\eqref{eq.FritzJohn}. Complementarity gives $y_i^c=0$ for all $i\in P\setminus\mathcal A(\xb)$.
  Retaining $y^f$ and only the constraint multipliers indexed by $\bar{\mathcal A}$ therefore gives $(y^f,(y_i^c)_{i\in\bar{\mathcal A}})\in\Sigma_{\bar{\mathcal A}}$ with zero objective in~\eqref{eq.dual_chi}. Since that objective is nonnegative at feasible points, $\chi(\xb,\bar{\mathcal A})=0$, and~\eqref{eq.error_bound} yields
  \[
    0=\chi(\xb,\bar{\mathcal A})\geq c_\chi\|\xb-\bar\xb\|_\infty.
  \]
  Thus $\xb=\bar\xb$, proving isolation among feasible Fritz-John points.

  If \Cref{assm:bounded_levelset} holds and no $\xb_k$ is a Fritz-John point, then, since $\bar\xb\in\mathcal X$ and every element of $\mathcal X$ is a Fritz-John point by \Cref{cor.convergence}(a), the isolation just proved makes $\bar\xb$ isolated in $\mathcal X$, and \Cref{cor.convergence}(a) gives $\mathcal X=\{\bar\xb\}$ and $\xb_k\to\bar\xb$.
\end{proof}

We next identify the range of radii where the active-set estimate is exact. Define
\begin{equation*}%
  \nu:=\tfrac12\min_{i\notin\bar{\mathcal A}}[-c_i(\bar\xb)],
  \qquad G:=\sqrt n\max_{i\in P}(\rho_{\nabla c_i}+\kappa_{\nabla c_i}),
\end{equation*}
where we interpret $\nu=\infty$ if $\bar{\mathcal A}=P$. 
Further define constants
\begin{equation*}%
    C_1:=\max_{i\in\bar{\mathcal A}}
          \frac{2\sqrt n\,\rho_{\nabla c_i}}{\|\nabla c_i(\bar\xb)\|_1}, \quad
    C_2:=\min\left\{1,\frac{\nu}{2G},
          \min_{i\in\bar{\mathcal A}}\frac{\|\nabla c_i(\bar\xb)\|_1}
                    {4\sqrt n\,\kappa_{\nabla c_i}}\right\}>0.
\end{equation*}
If $\bar{\mathcal A}=\emptyset$, we interpret $C_1=0$ and omit the third term defining $C_2$.

\begin{lemma}\label{lem.window}
  Under \Cref{assm:nondegeneracy}, there exists $\epsilon_1>0$ such that, at any $\xb_k$ satisfying $\|\xb_k-\bar\xb\|_\infty\leq\epsilon_1$, the estimate~\eqref{eq.Ak_explicit} satisfies $\Ak=\bar{\mathcal A}$ whenever
  \begin{equation}\label{eq.window}
    C_1\|\xb_k-\bar\xb\|_\infty\leq\Delta_k^\sigma\leq C_2.
  \end{equation}
  The trial point then satisfies $c_i(\xb_k + \stepb_k) < 0$ for all $i\in P\setminus\bar{\mathcal A}$.
\end{lemma}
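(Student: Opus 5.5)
We prove the two inclusions $\bar{\mathcal A}\subseteq\Ak$ and $\Ak\subseteq\bar{\mathcal A}$ separately, using the explicit form \eqref{eq.Ak_explicit}. Then we deduce the strict feasibility of the inactive constraints at the trial point.

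Throughout we use two facts. First, the trust region is contained in the estimation box: \eqref{eq.window} gives $\Delta_k^\sigma\leq C_2\leq1$, so $\Delta_k\leq\Delta_k^\sigma\leq 1$ (recall $\sigma\in(0,1)$). Second, by \Cref{assm:smoothness,assm:interpolation}, $\|g_k^{c_i}\|_1\leq\sqrt n\|g_k^{c_i}\|_2\leq\sqrt n(\rho_{\nabla c_i}+\kappa_{\nabla c_i}\Delta_k)\leq G$.

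\emph{Step 1: active constraints are retained.} Fix $i\in\bar{\mathcal A}$. Since $c_i(\bar\xb)=0$ and $\|\nabla c_i\|_2\leq\rho_{\nabla c_i}$, the mean value theorem gives $c_i(\xb_k)\geq-\sqrt n\,\rho_{\nabla c_i}\|\xb_k-\bar\xb\|_\infty$.

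For the model gradient, full linearity gives $\|g_k^{c_i}-\nabla c_i(\xb_k)\|_1\leq\sqrt n\kappa_{\nabla c_i}\Delta_k$. Lipschitz continuity gives $\|\nabla c_i(\xb_k)-\nabla c_i(\bar\xb)\|_1\leq nL_{\nabla c_i}\|\xb_k-\bar\xb\|_\infty$. Choose $\epsilon_1$ small enough that the second error is at most $\tfrac14\|\nabla c_i(\bar\xb)\|_1$. Since $\Delta_k\leq C_2$ and $C_2$ contains the term $\|\nabla c_i(\bar\xb)\|_1/(4\sqrt n\kappa_{\nabla c_i})$, the first error is also at most $\tfrac14\|\nabla c_i(\bar\xb)\|_1$. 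Hence
\[
\|g_k^{c_i}\|_1\geq\tfrac12\|\nabla c_i(\bar\xb)\|_1 .
\]
Combining this with the lower bound $\Delta_k^\sigma\geq C_1\|\xb_k-\bar\xb\|_\infty$, and using the definition of $C_1$,
\[
c_i(\xb_k)+\Delta_k^\sigma\|g_k^{c_i}\|_1\geq-\sqrt n\rho_{\nabla c_i}\|\xb_k-\bar\xb\|_\infty+\tfrac12\|\nabla c_i(\bar\xb)\|_1 C_1\|\xb_k-\bar\xb\|_\infty\geq0 .
\]
So $i\in\Ak$.

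\emph{Step 2: inactive constraints are excluded.} Fix $i\notin\bar{\mathcal A}$. Shrink $\epsilon_1$ so that $c_i(\xb)\leq-\tfrac32\nu$ on $\mathcal B(\bar\xb,\epsilon_1)$, which is possible because $c_i(\bar\xb)\leq-2\nu$. Then
\[
c_i(\xb_k)+\Delta_k^\sigma\|g_k^{c_i}\|_1\leq-\tfrac32\nu+C_2G\leq-\tfrac32\nu+\tfrac12\nu<0,
\]
so $i\notin\Ak$. This proves $\Ak=\bar{\mathcal A}$.

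\emph{Step 3: strict feasibility of the trial point.} For $i\notin\bar{\mathcal A}$ and any $\|\stepb_k\|_\infty\leq\Delta_k$, the mean value theorem and the gradient bound give
\[
c_i(\xb_k+\stepb_k)\leq c_i(\xb_k)+\sqrt n\rho_{\nabla c_i}\Delta_k\leq-\tfrac32\nu+G\Delta_k^\sigma\leq-\tfrac32\nu+\tfrac12\nu<0 .
\]
This uses $\Delta_k\leq\Delta_k^\sigma$ and holds regardless of whether $m^{c_i}$ is fully linear.

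The main obstacle is Step 1. There the gap between the model gradient and $\nabla c_i(\bar\xb)$ has to be controlled in terms of $\Delta_k$ (through $C_2$), while the slack deficit has to be controlled in terms of $\|\xb_k-\bar\xb\|_\infty$ (through $C_1$). Doing this requires choosing $\epsilon_1$ small relative to $\|\nabla c_i(\bar\xb)\|_1$, which is positive by LICQ. A further subtlety is that Step 1 needs fully linear models of the constraints indexed by $\bar{\mathcal A}$. \Cref{assm:interpolation} supplies these as a standing assumption, and the argument relies on it.
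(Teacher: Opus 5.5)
Your proposal is correct and follows essentially the same argument as the paper: the uniform bound $\|g_k^{c_i}\|_1\le G$, the lower bound $\|g_k^{c_i}\|_1\ge\tfrac12\|\nabla c_i(\bar\xb)\|_1$ for $i\in\bar{\mathcal A}$ obtained from the third term of $C_2$, and the use of $C_1$ and $\nu/(2G)$ to separate active from inactive constraints. The only differences are cosmetic: you use Lipschitz continuity where the paper uses plain continuity, and a margin of $-\tfrac32\nu$ instead of $-\nu$. One small correction to your closing remark: Step 2 also relies on \Cref{assm:interpolation}, since it uses the bound $\|g_k^{c_i}\|_1\le G$ for the inactive constraints, so it is not only Step 1 that needs fully linear constraint models.
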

\begin{proof}
  Choose $\epsilon_1$ such that for $\{\xb: \|\xb-\bar\xb\|_{\infty}\leq \epsilon_1\}$, we have $c_i(\xb)\leq-\nu$ for $i\notin\bar{\mathcal A}$ and
  $\|\nabla c_i(\xb)\|_1\geq\tfrac34\|\nabla c_i(\bar\xb)\|_1$ for $i\in\bar{\mathcal A}$.
  Since $\Delta_k\leq\Delta_k^\sigma\leq C_2\leq1$, full linearity gives $\|g_k^{c_i}\|_1\leq G$ for every $i$ and
  $\|g_k^{c_i}\|_1\geq\tfrac12\|\nabla c_i(\bar\xb)\|_1$ for $i\in\bar{\mathcal A}$.
  Writing $d:=\|\xb_k-\bar\xb\|_\infty$, it follows that
  \[
    c_i(\xb_k)+\Delta_k^\sigma\|g_k^{c_i}\|_1
    \begin{cases}
      \leq-\nu+GC_2\leq-\nu/2<0,&i\notin\bar{\mathcal A},\\
      \geq-\sqrt n\rho_{\nabla c_i}d
         +\tfrac12\Delta_k^\sigma\|\nabla c_i(\bar\xb)\|_1\geq0,&i\in\bar{\mathcal A},
    \end{cases}
  \]
  proving exact identification. For $i\notin\bar{\mathcal A}$ and $\|\stepb\|_\infty\leq\Delta_k$, bounded gradients also give
  $c_i(\xb_k+\stepb)\leq-\nu+\sqrt n\rho_{\nabla c_i}\Delta_k\leq-\nu/2<0$.
\end{proof}

We will next show that the criticality loop beginning in \Algline{alg.main_alg_chi}{step.criticality} suffices to yield $\Delta_k$ satisfying~\eqref{eq.window}.
Towards this end, define constants
\begin{equation*}%
  R:=C_2^{1/\sigma},\qquad
  \overline\mu:=\frac{R}{\sqrt n(\rho_{\nabla f}+\kappa_{\nabla f}R)}.
\end{equation*}

\begin{lemma}\label{lem.landing}
  Let \Cref{assm:nondegeneracy} hold and let $0<\mu\leq\overline\mu$. 
  There exists $\epsilon_2>0$ such that, if $\xb_k$ satisfies
  $0<\|\xb_k-\bar\xb\|_\infty\leq\epsilon_2$ and the criticality loop is entered with $\Delta_k\geq \underline\Delta$, then the criticality loop terminates finitely at a radius satisfying~\eqref{eq.window}.
\end{lemma}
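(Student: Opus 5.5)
The plan is to track the radius through the criticality loop in two phases. While $\Delta_k>R$, I will show the loop cannot stop. Once $\Delta_k\leq R$, I will show that every radius the loop visits above a threshold proportional to $d:=\|\xb_k-\bar\xb\|_\infty$ lies in the window \eqref{eq.window}. On that range \Cref{lem.window} gives $\Ak=\bar{\mathcal A}$, and the error bound of \Cref{lem.error_bound} then forces the loop to stop before the radius leaves the window. I will take $\epsilon_2\leq\min\{\epsilon_1,\epsilon_\chi\}$ from \Cref{lem.window,lem.error_bound} and shrink it further as needed. Throughout I use that $\xb_k$ is feasible (\Cref{lem.well-defined}) and that the models are rebuilt to be fully linear at every radius the loop visits.

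\emph{Upper end, $\Delta_k^\sigma\leq C_2$.} By monotonicity in \Cref{prop.criticality}, $\chi_k(\xb_k,\Ak)\leq\chi_k(\xb_k,\emptyset)=\|g_k^f\|_1$. By \Cref{assm:smoothness,assm:interpolation}, $\|g_k^f\|_1\leq\sqrt n(\rho_{\nabla f}+\kappa_{\nabla f}\Delta_k)$. Hence at termination
\[
\Delta_k(1-\mu\sqrt n\kappa_{\nabla f})\leq\mu\sqrt n\rho_{\nabla f}.
\]
The choice $\mu\leq\overline\mu$ is equivalent to $\mu\sqrt n\rho_{\nabla f}\leq R(1-\mu\sqrt n\kappa_{\nabla f})$, so in particular $1-\mu\sqrt n\kappa_{\nabla f}>0$. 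Termination with $\Delta_k>R$ would therefore give a contradiction. So the loop never stops while $\Delta_k>R$, and the terminal radius satisfies $\Delta_k\leq R$, i.e.\ $\Delta_k^\sigma\leq C_2$. The entry radius $\Delta_k\geq\underline\Delta$ plays no role here beyond fixing where the descent starts.

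\emph{Lower end, $\Delta_k^\sigma\geq C_1 d$.} Set $c':=\mu c_\chi/(1+\mu\kappa_\chi)$. Suppose a visited radius satisfies $\Delta_k\leq R$ and $\Delta_k^\sigma\geq C_1 d$. Then \Cref{lem.window} gives $\Ak=\bar{\mathcal A}$, and by \eqref{eq.chi_error} and \eqref{eq.error_bound},
\[
\chi_k(\xb_k,\bar{\mathcal A})\geq c_\chi d-\kappa_\chi\Delta_k .
\]
Hence the loop condition fails, and the loop stops, as soon as $\Delta_k\leq c'd$. Since $d>0$, the geometric decrease reaches such a radius after finitely many reductions, which gives finite termination. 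Consequently the terminal radius is at least $\min\{\underline\Delta,\gammad c'd\}$; the alternative $\Delta_k>R$ was already ruled out.

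It remains to check that every radius in $[\gammad c'd,R]$ satisfies $\Delta^\sigma\geq C_1 d$, so the window condition indeed holds all along the descent down to that level. This needs $(\gammad c'd)^\sigma\geq C_1 d$, which holds for $d\leq(\gammad c')^{\sigma/(1-\sigma)}C_1^{-1/(1-\sigma)}$ because $\sigma<1$. We also need $\underline\Delta^\sigma\geq C_1 d$ if the entry radius is already below $R$, and this holds for small $d$. Shrinking $\epsilon_2$ to enforce both conditions finishes the argument.

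The main obstacle is this consistency argument. The lower bound on $\chi_k$ relies on $\Ak=\bar{\mathcal A}$, which itself requires the window condition. So I must verify, inductively over the loop's reductions, that the window condition holds at every radius visited between $R$ and the stopping point, not just at the end. This is exactly where $\sigma<1$ and the strict positivity $d>0$ are used.
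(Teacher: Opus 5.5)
Your proposal is correct and takes essentially the same route as the paper. The same $\|g_k^f\|_1$ bound with $\mu\leq\overline\mu$ rules out exit above $R$. Then the first radius below a threshold linear in $d$ lies in the window because $\sigma<1$, so \Cref{lem.window} and the error bound force exit there. The only difference is cosmetic: you substitute the error bound at once to get the explicit threshold $c'd=\mu c_\chi d/(1+\mu\kappa_\chi)$, whereas the paper works with $T=\min\{\chi/(2\kappa_\chi),\mu\chi/2,R\}$ and lower-bounds it by $a_0 d$ afterwards.
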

\begin{proof}
 Define $d:=\|\xb_k-\bar\xb\|_\infty$, $\chi:=\chi(\xb_k,\bar{\mathcal A})$, 
    $r:=(C_1d)^{1/\sigma}$, and 
    $T:=\min\{\chi/(2\kappa_\chi),\mu\chi/2,R\}.$
  By \Cref{lem.error_bound}, $\chi\geq c_\chi d$, so $T\geq a_0d$ for some $a_0>0$ and all sufficiently small $d>0$. Since $\sigma<1$, $r=o(d)$.
  Thus, we can choose $\epsilon_2\leq\min\{\epsilon_1,\epsilon_\chi\}$ small enough that
   $r\leq\min\{\underline\Delta,\gammad T\}$
 for all $d\in(0,\epsilon_2]$.

  At any radius $\Delta>R$, monotonicity and full linearity give
  \[
    \mu\chi_k(\xb_k,\Ak)\leq\mu\|g_k^f\|_1
    \leq\frac{R(\rho_{\nabla f}+\kappa_{\nabla f}\Delta)}
                  {\rho_{\nabla f}+\kappa_{\nabla f}R}<\Delta.
  \]
  Thus the criticality loop cannot exit with a $\Delta_k$ greater than $R$. 
  If the criticality loop exits before reaching a radius at most $T$, then $\Delta_k\in(T,R]\subseteq[r,R]$. Otherwise, the first radius $\Delta\leq T$ satisfies $\Delta\geq r$: $\Delta$ is either the radius that started the criticality loop or else its predecessor exceeds $T$ and $\Delta>\gammad T\geq r$. Therefore \Cref{lem.window} gives $\Ak=\bar{\mathcal A}$, and~\eqref{eq.chi_error} yields
  \[
    \chi_k(\xb_k,\Ak)\geq\chi-\kappa_\chi\Delta\geq\chi/2,
    \qquad \Delta\leq T\leq\mu\chi/2\leq\mu\chi_k(\xb_k,\Ak).
  \]
  The criticality loop must exit at this radius $\Delta$. 
  In either case, the criticality loop terminates with $\Delta_k\in[r,R]$, which is precisely~\eqref{eq.window}.
\end{proof}

\begin{theorem}[Active-set identification]\label{thm.identification}
  Suppose \Cref{assm:smoothness,assm:interpolation,assm:bounded_hessian,assm:bounded_below,assm:bounded_levelset,assm:mfcq,assm:nondegeneracy} hold, $0<\mu\leq\overline\mu$, and no $\xb_k$ is a Fritz-John point.
   Then $\xb_k\to\bar\xb$. Moreover, for sufficiently large $k$ with criticality loop entered with radius at least $\underline\Delta$,
  \begin{enumerate}
    \item[(a)] the criticality loop terminates with $\Ak=\bar{\mathcal A}$ and~\eqref{eq.window};
    \item[(b)] $\Ak$ is not augmented, so \eqref{eq:step_subproblem} in the $k$th iteration uses $\Ak=\bar{\mathcal A}$.
  \end{enumerate}
  These conclusions apply, in particular, to all sufficiently large $k$ immediately following an accepted step, and hence, to infinitely many iterations.
\end{theorem}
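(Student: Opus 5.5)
The argument assembles \Cref{lem.error_bound}, \Cref{lem.window}, and \Cref{lem.landing}.

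\textbf{Convergence.} Under \Cref{assm:bounded_levelset} and the hypothesis that no iterate is a Fritz-John point, the last claim of \Cref{lem.error_bound} gives $\xb_k\to\bar\xb$ directly. Hence there is $K$ such that $\|\xb_k-\bar\xb\|_\infty\leq\epsilon_2\leq\epsilon_1$ for all $k\geq K$, with $\epsilon_1,\epsilon_2$ from \Cref{lem.window,lem.landing}.

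\textbf{Part (a).} Consider $k\geq K$ whose criticality loop is entered with radius at least $\underline\Delta$. Two cases arise.
\begin{itemize}
\item If $\xb_k\neq\bar\xb$, then \Cref{lem.landing}, which uses $\mu\leq\overline\mu$, shows the loop terminates at a radius satisfying \eqref{eq.window}. \Cref{lem.window} then gives $\Ak=\bar{\mathcal A}$.
\item The case $\xb_k=\bar\xb$ cannot occur. Indeed $\bar\xb$ is a Fritz-John point by \Cref{thm.main}, while by hypothesis no iterate is one.
\end{itemize}

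\textbf{Part (b).} We must show that no augmentation happens in the main loop. By the last claim of \Cref{lem.window}, the trial point satisfies $c_i(\xb_k+\stepb_k)<0$ for every $i\notin\bar{\mathcal A}=\Ak$. So any violated index $i_{\mathrm{viol}}$ would have to lie in $\Ak$. In that case the algorithm takes the else branch in \Algline{alg.main_alg_chi}{line:rejected_viol} and rejects the step; it never reaches \Algline{alg.main_alg_chi}{line:expand_active_set}. Either way $\Ak$ is never augmented, so the subproblem \eqref{eq:step_subproblem} solved in iteration $k$ uses $\Ak=\bar{\mathcal A}$, as claimed. One point needs checking: the bound in \Cref{lem.window} uses only $\Delta_k\leq\Delta_k^\sigma\leq C_2\leq 1$ and bounded gradients. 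It therefore holds for the step computed with this terminal radius, which stays fixed throughout the main loop.

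\textbf{Final sentence.} After an accepted step, line \ref{line:very_successful} or \ref{line:successful} resets $\Delta_{k+1}\geq\underline\Delta$, and this is the radius entering the next criticality loop. So (a) and (b) apply at every $k+1\geq K$ following an acceptance. By \Cref{thm.main} there are infinitely many accepted steps, hence infinitely many such iterations.

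\textbf{Main obstacle.} The delicate point is the hypothesis $0<\|\xb_k-\bar\xb\|_\infty$ in \Cref{lem.landing}. I handle it as above, by noting that $\bar\xb$ is Fritz-John and therefore never an iterate. A secondary check is that the radii at which $\Ak$ is recomputed inside the loop are consistent with the terminal radius; \Cref{lem.landing} already accounts for this.
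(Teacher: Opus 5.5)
Your proposal is correct and follows the paper's proof essentially step for step. You get $\xb_k\to\bar\xb$ from \Cref{lem.error_bound}, part (a) from \Cref{lem.landing} combined with \Cref{lem.window} after ruling out $\xb_k=\bar\xb$ because $\bar\xb$ is a Fritz-John point, part (b) from the strict inactivity of $P\setminus\bar{\mathcal A}$ at trial points given by \Cref{lem.window}, and the final claim from the radius reset after acceptance together with \Cref{thm.main}; your extra checks ($\epsilon_2\le\epsilon_1$, the fixed terminal radius in the main loop, and rejection rather than augmentation when the violated index lies in $\Ak$) just spell out what the paper's terse proof leaves implicit.
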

\begin{proof}
  By \Cref{lem.error_bound}, $\xb_k\to\bar\xb$. There is no $\xb_k$ such that $\xb_k=\bar\xb$, since it is a KKT point and therefore a Fritz-John point. Thus \Cref{lem.landing} applies to all sufficiently large $k$ with radius going into the criticality loop bounded below by $\underline\Delta$, proving (a). 
  By \Cref{lem.window}, no constraint in $P\setminus\bar{\mathcal A}$ can be violated at any trial point, so \Algline{alg.main_alg_chi}{line:expand_active_set} is never reached, proving (b).
  Finally, \Cref{thm.main} gives infinitely many accepted steps, each of which resets the radius going into the criticality loop to at least $\underline\Delta$.
\end{proof}

\section{Numerical Experiments} \label{sec.experiments}

In this section, we test the two issues motivating the design of \Cref{alg.main_alg_chi}, together with the identification result of \Cref{thm.identification}. 
Accordingly, we ask three questions. 
\begin{enumerate}
\item Does the FDM subproblem~\eqref{eq:step_subproblem} produce feasible trial points more reliably than the standard TR-SQP subproblem~\eqref{eq:ideal_sqp} when both are embedded in the same feasible-incumbent framework? 
\item How much of the cost of approximating the constraint Jacobian is saved by the active-set estimate $\Ak$ and by minimally updating models of constraints outside $\Ak$, and at what cost in progress? 
\item Is the optimal active set eventually identified in practice, as \Cref{thm.identification} predicts? 
\end{enumerate}
\subsection{Implementation}\label{sec.impl}

We implemented \Cref{alg.main_alg_chi} in Python.
The details that the analysis leaves open are resolved as follows.

\paragraph{Objective and constraint models} 
The linear models in \Cref{alg.main_alg_chi} are obtained from the model-building routine \texttt{formquad} of \texttt{POUNDERS}\cite{ibcdfo}, applied to each function $(f(\xb), c_1(\xb), \dots, c_p(\xb)$ individually.
Essentially, \texttt{formquad} will search the history of past evaluations of a function for $n+1$ affinely independent points including, and centered around, $\xb_k$. 
Additional points can be added as necessary, and if geometrically appropriate in a precise sense, to fit a minimum Frobenius norm quadratic correction. 
Details of \texttt{formquad} are provided in~\cite[Section 4]{SW08}.
 The resulting models are fully linear on $\mathcal{B}(\xb_k,\Delta_k)$ in the sense of \Cref{assm:interpolation}, see \cite[Theorem 4.1]{SW08}. 
 In our algorithm, and in accordance with our theoretical results, we only employ linear models as $m^f$ and $m^{c_i}$ for each $i=1,\dots,p$. 
 That is, we use the gradient of the model for $f$ returned by \texttt{formquad} as $g_k^f$ and the gradient of the model of each $c_i$ as returned by \texttt{formquad} as $g_k^{c_i}$.
 For $H_k$ in \cref{eq:step_subproblem}, we clip 
the eigenvalues of the model Hessian for $f$ returned by \texttt{formquad} to lie between $[10^{-6},10^{8}]$ and use that matrix, so that \Cref{assm:bounded_hessian} holds.
We do not make use of any Hessian term returned by \texttt{formquad} for any constraint models.
Note that \texttt{formquad} draws on the history of past evaluations of $f$
without regard to feasibility, so $m^f$ may be built using objective values at
infeasible points, even though the incumbents $\xb_k$ themselves remain
feasible. 
It is this precisely this shortcoming that would need to be addressed in extending our method to handle unrelaxable constraints. 

\paragraph{Lazy constraint models}
In the theory all $p$ constraint models are made fully linear at every iteration of \Cref{alg.main_alg_chi}, see \Cref{assm:interpolation}. To limit the number of constraint evaluations, we experiment with loosening this. Because every incumbent is tested for feasibility in \Algline{alg.main_alg_chi}{line:eval_ci}, the values $c_i(\xb_k)$ are known exactly; only the gradients are in question, and \eqref{eq.Ak_explicit} uses them only through $\|g^{c_i}_k\|_1$, weighed against that exact slack. A gradient $g^{c_i}_{\ell}$ from an earlier iteration $\ell < k$ may therefore still place $c_i$ on the correct side of the test. 

The lazy variant interleaves \Algline{alg.main_alg_chi}{step.models} and \Algline{alg.main_alg_chi}{step.active_set}. We evaluate \eqref{eq.Ak_explicit} with the exact values and the most recent gradients; build the models of $f$, of the constraints in that preliminary estimate, of those in $\mathcal{A}_{k-1}$, and of any constraint not rebuilt in the last $r_{\rm refresh}$ iterations (default $10$), evaluating geometry points for these components alone; and evaluate \eqref{eq.Ak_explicit} again with the fresh gradients. A constraint entering the estimate only at this point has its model built as well, after which the estimate is formed once more. Constraints that stay outside keep their gradients and are evaluated at no geometry point of this iteration, so a geometry point costs $1+|\Ak|$ evaluations rather than $p+1$. 

We refer to the faithful representation of \Cref{alg.main_alg_chi} and this lazy variant as \emph{all-models} and \emph{lazy}, respectively; \Cref{sec.exp_cost} quantifies the difference.

\paragraph{Subproblems} 
The step subproblem \eqref{eq:step_subproblem} and the criticality measure \eqref{eq.chik_def} (obtained from \eqref{eq:step_subproblem} by omitting the quadratic term and using the unit ball) are convex quadratic (respectively linear) programs in the variables $(\stepb,\tau)$ with $1 + |\Ak| + 2n$ linear inequality constraints. 
We solve these subproblems with the interior-point solver Clarabel \cite{goulart2024clarabel}, with all tolerances set to $10^{-10}$. 
A step $\stepb_k$ satisfying $\|\stepb_k\|_\infty \leq 10^{-8}\Delta_k$ is treated as $\stepb_k = 0$.

\paragraph{Constraint evaluation at trial points}
In \Algline{alg.main_alg_chi}{line:eval_ci} the constraints are evaluated one at a time and the loop exits at the first violation; any value that is not a floating-point number (for instance, \texttt{NaN} or infinite-valued) is treated as a violation. 
We evaluate the constraints in $\Ak$ before we evaluate the constraints in $P\setminus \Ak$, since the constraints in $\Ak$ are empirically more likely to be violated.

\paragraph{Parameters and termination}
Unless stated otherwise, $\Delta_0 = 1$, the parameters of \Cref{alg.main_alg_chi} are set as $\underline\Delta = 10^{-3}$, $\mu = 1$, $\sigma = 0.5$, $\gammad = 0.5$, $\gammai = 2$, $\eta_1 = 0.2$, and $\eta_2 = 0.5$. 
Each run is given a budget of $200(n+1)(p+1)$ oracle queries; we will discuss accounting of oracle queries further in \Cref{sec.exp_measures}.
We also terminate a run if $\Delta_k < 10^{-10}$, or if, simultaneously, $\chi_k(\xb_k,\Ak) \leq 10^{-8}$ and $\Delta_k \leq 10^{-8}$ upon exiting the criticality loop. 
While a proper stopping criterion should also depend on a guarantee that the active set was correctly identified, the condition $\mu \leq \overline\mu$ of \Cref{thm.identification} cannot be checked in practice since $\overline\mu$ depends on unknown constants.
Nevertheless, the optimal active set is identified on most problems, as we will exhibit in \Cref{sec.exp_identification}.

\subsection{Test problems}\label{sec.exp_problems}
We draw test problems from the CUTEst collection~\cite{gould2015cutest} through the PyCUTEst interface~\cite{fowkes2022pycutest}. 
From all problems classified as ``regular" with general (linear, quadratic, or other) constraints and $n \leq 30$ variables, we keep those problems with inequality constraints only. 
We rewrite every constraint $c_\ell \leq C(\xb) \leq c_u$ with finite $c_\ell$ or $c_u$ as a single constraint $c_i(\xb) \leq 0$.
Because \Cref{alg.main_alg_chi} has no separate mechanism for simple bounds, each finite variable bound is treated the same way and counted in $p$; problems with $p > 100$ after this conversion are discarded.
We also remark here that an extension of \Cref{alg.main_alg_chi} that handles simple bound constraints would be straightforward to implement, but we do not want to distract from the main intentions of this paper. 

\Cref{alg.main_alg_chi} requires a feasible starting point $\xb_0$. In the applications that motivate this work, where constraints are unrelaxable, such a point is typically known in advance.
Finding a feasible starting point when none is known a priori is a separate and challenging problem that we do not address. 
For the CUTEst problems in our subset with an infeasible starting point, we replace the starting point by the nearest point satisfying $c_i(\xb) \leq -10^{-3}(1+|c_i(\xb_0)|)$ for all $i$, as computed by a derivative-based SQP solver. 
Six problems for which no such feasible starting point was found were discarded. 

\subsection{Solvers and performance measures}\label{sec.exp_measures}
We compare the following variants of our implementation of \Cref{alg.main_alg_chi}, designed to probe various features we discussed in \Cref{sec.impl}.

\begin{itemize}
    \item \solver{FDM-AS-full}: The most faithful direct representation of \Cref{alg.main_alg_chi} that was analyzed in this paper. Most importantly (and expensively), this variant will build/update all $p$ constraint models in every iteration.
    \item \solver{FDM-AS-lazy}: The method \solver{FDM-AS-full}, except we employ lazy model updates, as  discussed in \Cref{sec.impl}. 
    \item \solver{FDM-ALL}: The method \solver{FDM-AS-full}, except we set $\Ak = \{1,\dots,p\}$ in every instance of the step subproblem \eqref{eq:step_subproblem}. That is, we effectively disable active-set estimation. 
    \item \solver{SQP-ALL}: The method \solver{FDM-AS-full}, except we replace every instance of the step subproblem \eqref{eq:step_subproblem} with \eqref{eq:ideal_sqp}.
    That is, we employ an SQP subproblem, using the same linear constraint models and the same quadratic term $H_k$ that would have been supplied to \eqref{eq:step_subproblem}. 
    In this variant, whenever \eqref{eq:ideal_sqp} admits an infeasible step $\stepb_k$, or when the predicted objective decrease satisfies $m^f(\xb_k) - m^f(\xb_k+\stepb_k)<0$, the iteration is unsuccessful, $\Delta_k$ is reduced, and a new iteration begins. 
    \item \solver{SQP-AS-lazy}: This is the method \solver{SQP-ALL}, except we enable the lazy model updates employed in \solver{FDM-AS-lazy}.
    \item \solver{CONORBIT}: A method of Regis and Wild~\cite{regis2017conorbit}, which builds radial basis function models of the objective and constraints and, like our method, aims to maintain feasible incumbents. \solver{CONORBIT} queries $f$ and all $p$ constraints at every point it selects. We ran the \solver{CONORBIT} implementation provided to us by its developers in MATLAB R2026a. We used the author's default settings in their driver script.
\end{itemize}

\paragraph{Oracle cost}
For our experiments, we cast the solution of problem~\eqref{eq:opt_prob} as requiring access to $p+1$ distinct oracles: one oracle for the objective $f$ and one for each of the constraints $c_1,\dots,c_p$. 
Our budget unit for these experiments is one scalar query of any one of these oracles at any point $\xb\in\mathbb{R}^n$. 
In this cost model, a solver is charged only for the queries it makes. 
This method of accounting targets our second premise we cast in \Cref{sec.intro}, namely that a method for expensive simulation-based optimization should minimally evaluate any given oracle. 
This cost model assumes that all constraints are computationally independent. 
The criterion of \Algline{alg.main_alg_chi}{line:eval_ci}, which halts constraint evaluations upon the first violated constraint, together with the lazy models of \Cref{sec.impl}, which only updates constraint models indexed in $\Ak$, are both intentionally designed to be cost-effective according to such an accounting model. 

In presenting our results, we normalize total oracle queries, aggregated across all $p+1$ oracles, by $(n+1)(p+1)$.
This means that one unit under this normalization is equivalent to the cost of computing one forward-difference approximation of the Jacobian of the vector-valued function $(f,c_1,\dots,c_p)$ evaluated at an arbitrary point $\xb\in\mathbb{R}^n$.
 We also state budgets in this normalized metric: each run is given a budget of $200(n+1)(p+1)$ oracle queries, that is $200$ approximate Jacobians, after normalization.

\paragraph{Data profiles}
The data profiles we employ here deviate slightly from standard data profiles, as described by Mor\'e and Wild~\cite{more2009benchmarking}. 
Given our motivation of computational settings in which constraint violations may suggest physically meaningless results, we only wish to reward low objective values provided the corresponding 
approximate minimizer is feasible.
For a point $\xb$ visited by a run of a solver $s$, we denote by $t(\xb)$ the cumulative number of oracle queries the run of solver $s$ has issued at the moment the last of $f(\xb),c_1(\xb),\dots,c_p(\xb)$ is returned. 
This bookkeeping ensures that the solver $s$ cannot receive credit for identifying an approximate minimizer $\xb$ until the solver $s$ has verified that $\xb$ is indeed feasible. 

We now demonstrate how we use $t(\xb)$ in our data profile. 
Let $\mathcal{P}$ denote the test set described in \Cref{sec.exp_problems}.
For $j\in\mathcal{P}$, let $p_j$ be the number of constraints in the $j$th problem, and let $\xb_0^j$ denote the feasible starting point, common to all solvers. 
Let $n_j$ denote the number of variables for the $j$th problem. Fix $\varepsilon_\tau \in \{10^{-1},10^{-3},10^{-5},10^{-7}\}$ and let $f_L^j$ be the smallest objective value attained at a feasible point by any solver on problem $j$ within the budget. 
Any solver $s$ generates a sequence of oracle queries $\{(\xb, g)\}$, where $\xb$ is the queried point and $g\in\{f,c_1,\dots,c_p\}$ denotes the oracle queried. 
In post-processing, one can convert $\{(\xb,g)\}$ into a sequence $\{(t(\xb),f(\xb))\}$, where we denote $t(\xb)=\infty$ if not all $p+1$ distinct oracle calls were made at $\xb$ in the sequence $\{(\xb,g)\}$.
Then, we define 
\begin{equation}\label{eq.convtest}
  t_{j,s} := \min\Big\{ t(\xb) : \{(t(\xb),f(\xb))\}\ \text{generated by $s$ and }\ f(\xb) \leq f_L^j + \varepsilon_\tau\big(f(\xb_0^j)-f_L^j\big) \Big\}. 
\end{equation}
If $f(\xb) \leq f_L^j + \varepsilon_\tau\big(f(\xb_0^j)-f_L^j\big)$ is never satisfied, we define $t_{j,s}=\infty$ by convention. 
 The data profile for solver $s$ is then 
\begin{equation}\label{eq.dataprofile}
  d_s(\alpha) := \frac{1}{|\mathcal{P}|}
  \left| \left\{ j \in \mathcal{P} : \frac{t_{j,s}}{(p_j+1)(n_j+1)} \leq \alpha \right\} \right| ,
\end{equation}
or the fraction of the test set $\mathcal{P}$ that solver $s$ solves to accuracy $\varepsilon_\tau$ within $\alpha$ normalized evaluations. 
We compare multiple solvers on one profile per value of $\varepsilon_\tau$. 

\subsection{Feasibility of trial points}\label{sec.exp_feasibility}

\Cref{fig.feasibility} addresses our first question. 
For each run, we record the fraction of trial points $\xb_k + \stepb_k$ that satisfy $c(\xb_k+\stepb_k) \leq 0$, that is, that reach \Algline{alg.main_alg_chi}{line:iviol_zero}. 
Given our motivation, this fraction is of practical concern to us;
an infeasible trial point is always rejected.
By the logic of \Cref{alg.main_alg_chi}, this rejection either results in the violated constraint being added to $\Ak$ or a reduction in $\Delta_k$; both outcomes generally entail building or improving constraint models at the cost of constraint evaluations. 
We stress that this fraction is not a measure of progress;
a method may produce a larger fraction of infeasible trial points and still reach a solution sooner.
Overall performance is instead compared via data profiles in \Cref{sec.exp_cost}.

With the FDM subproblem \eqref{eq:step_subproblem}, the median percentage of feasible trial points generated by \solver{FDM-ALL} over all the test problems is $98.7\%$; similarly, $98.4\%$ for \solver{FDM-AS-full} and $97.4\%$ for \solver{FDM-AS-lazy}.
With the SQP subproblem \eqref{eq:ideal_sqp}, the median percentage is only $7.9\%$ for \solver{SQP-ALL} and $0.9\%$ for \solver{SQP-AS-lazy}. 
Fewer than half of the trial points are feasible on $24$--$26\%$ of the problems for the FDM variants, compared with $73\%$ for \solver{SQP-ALL} and $87\%$ for \solver{SQP-AS-lazy}. 
We argue this is because
the step generated by the SQP subproblem \eqref{eq:ideal_sqp} must land on the boundary of the linearized constraints; the curvature of the constraints often makes such steps infeasible.
On the other hand the step generated by the FDM subproblem \eqref{eq:step_subproblem} generally lies in the interior of the linearized feasible region (\Cref{fig.intro}(a)).

\begin{figure}
    \centering
    \includegraphics[width=0.7\linewidth]{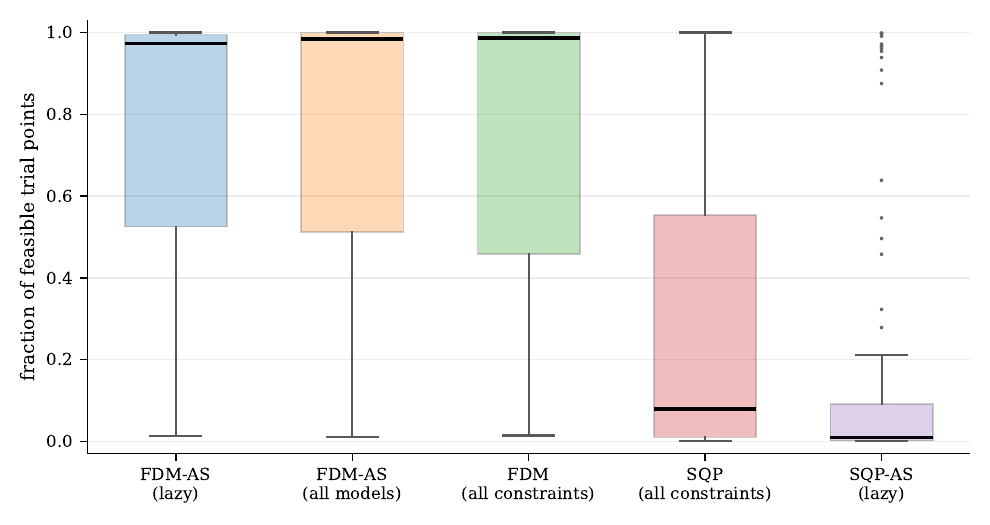}
    \caption{Fraction of trial points $\xb_k+\stepb_k$ satisfying $c(\xb_k+\stepb_k)\leq 0$, over the $127$ test problems.  The step subproblem \eqref{eq:step_subproblem} produces feasible trial points on the large majority of iterations; the SQP-like subproblem \eqref{eq:ideal_sqp} does not.}
    \label{fig.feasibility}
\end{figure}

\subsection{Oracle cost}\label{sec.exp_cost}

\Cref{fig.profiles} addresses the second question outlined in this section. 
Over the $127$ problems in $\mathcal{P}$, \solver{FDM-AS-lazy} evaluates a median of $87.5\%$ of the
$p$ constraints at each evaluated point, compared to a median of $99.6\%$ for
\solver{FDM-AS-full} and $99.9\%$ for \solver{FDM-ALL}.
As a reminder, the latter two methods rebuild models at every iteration, while the former does not.
We remark that this experiment also illustrates that, empirically, it is not simply the estimation of $\Ak$ that yields saving in total oracle queries. 
Active-set estimation reduces the size
of the subproblem \eqref{eq:step_subproblem}, not the number of models built or updated; this explains why
\solver{FDM-AS-full} and \solver{FDM-ALL} are nearly indistinguishable in this metric. 

The benefit of lazy models is most visible early in the run. 
In \Cref{fig.profiles}, \solver{FDM-AS-lazy} 
has a very slight lead over the other FDM variants for roughly the first normalized evaluations
at the tighter accuracies: At $\varepsilon_\tau=10^{-5}$ and $\alpha=10$ it
solves $18\%$ of the problems, versus $12\%$ for \solver{FDM-AS-full} and $9\%$
for \solver{FDM-ALL}.
Over the rest of the budget at tigher $\varepsilon_\tau$ levels, the three FDM essentially indistinguishable. 
The FDM variants solve far more problems than the SQP variants at every
accuracy once $\alpha$ exceeds about $10$.

\begin{figure}[h!]
    \centering
    \includegraphics[width=0.85\linewidth]{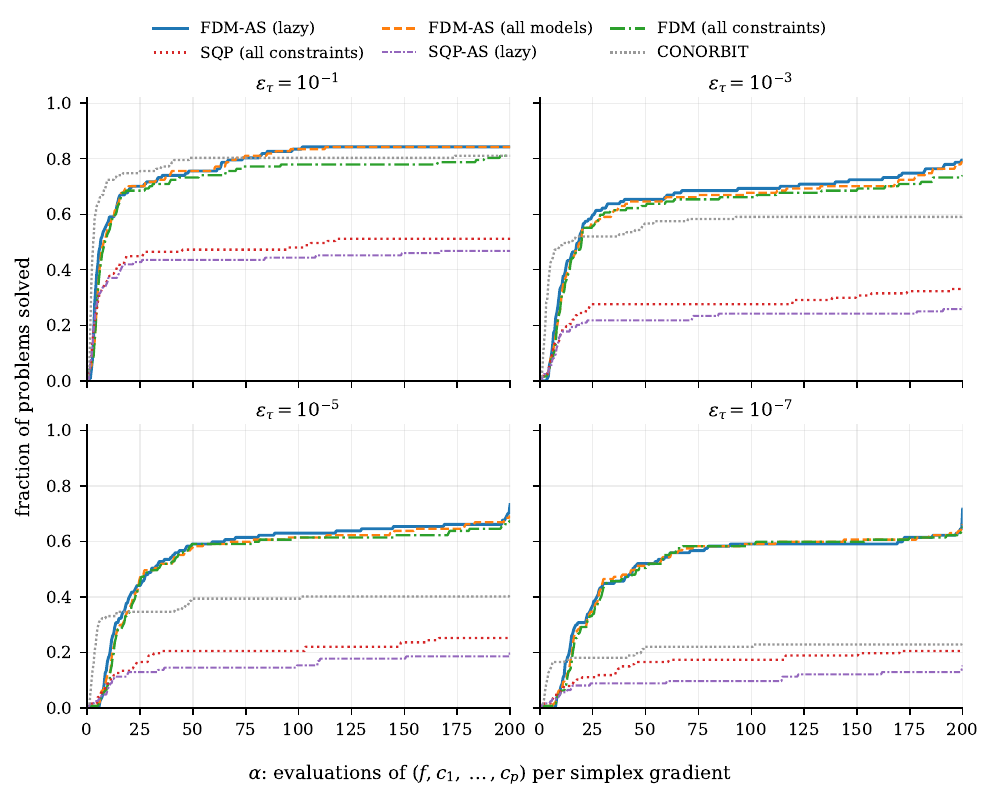}
    \caption{Data profiles \eqref{eq.dataprofile} at four accuracies.  One unit on
      the horizontal axis is one forward-difference approximation of
      $(f,c_1,\dots,c_p)$, so the budget is reached at $\alpha = 200$.  An
      objective value is credited only at a point where every constraint has been
      queried and found satisfied.}
    \label{fig.profiles}
\end{figure}

\subsection{Active-set identification}\label{sec.exp_identification}

For the purpose of probing the third question outlined in the start of this section, concerning the utility of active set identification, we must identify a reference solution $\bar{\xb}$ for each problem in $\mathcal{P}$ so that we may compare an estimate $\Ak$ to the true active set $\mathcal{A}(\bar{x})$. 
Of the problems in $\mathcal{P}$ identified in \Cref{sec.exp_problems}, we computed a reference solution $\bar{\xb}$ by running the solvers \texttt{SLSQP} and \texttt{trust-constr} as implemented in SciPy~\cite{scipy}.
These solvers were supplied with exact Jacobians and initialized five times, once with the starting point $\xb_0$ and four times with random perturbations of the starting point.
We record the reference point $\bar{\xb}$ as the least-objective feasible point returned by the solver, among all five runs. 
Any problem for which a feasible result could not be obtained was discarded.
The resulting problem set contains 127 problems with $n \in [2,30]$ and $p \in [1,90]$.
Once a reference point $\bar{\xb}$ is computed,
the reference active set $\bar{\mathcal{A}}$ is derived from $\bar{\xb}$ by indexing all constraints satisfying $c_i(\bar{\xb}) \geq -10^{-6}(1+|c_i(\bar{\xb})|)$.
We note that strict complementarity, which \Cref{thm.identification} requires, holds at 114 of the 127 reference points. 

Note that, due to our procedure for generating reference points $\bar{\xb}$, the reference point is independent of any of the solvers that we compare.
Moreover, we note that $\bar{\xb}$ is only provably a stationary point; we have no guarantee that any $\bar{\xb}$ is globally optimal.

With this setup, \Cref{fig.identification} provides evidence to answer our third question.
Recall that \Cref{thm.identification} states, near a nondegenerate limit point $\bar{\xb}$ if the algorithmic parameter $\mu>0$ is chosen sufficiently small, 
then the estimate $\Ak$ produced at the end of the criticality loop satisfies $\Ak=\bar{\mathcal{A}}$ for all $k$ such that the $(k-1)$th iteration resulted in an accepted step. 
\Cref{thm.identification} specifically concerns the active set at the limit point $\bar\xb$ of $\{\xb_k\}$.
If $\{\xb_k\}$ converges to a limit point that does coincide with a reference point that we identified, or if within a finite budget, $\{\xb_k\}$ does not seem to converge to any limit point, then \Cref{thm.identification} need not apply.
On only $77$ of the $114$ problems for which we identified a reference point satisfying strict complementarity did \solver{FDM-AS-lazy} end at an incumbent $\xb_k$ that was
within $10^{-2}$ (in a relative $\|\cdot\|_\infty$ norm) of the reference point $\bar\xb$.
However, of the $77$ runs satisfying the criteria of \Cref{thm.identification}, noting that we do not know or estimate $\bar\mu$, the estimate $\Ak=\mathcal{A}(\bar\xb)$ throughout the tail of the run, as desired. More specifically, on $74$ of these $77$ problems, $\Ak = \bar{\mathcal{A}}$ at every iteration of the last quarter of the run. 
\Cref{fig.identification} shows four such runs. 
Each panel of \Cref{fig.identification} plots differences between the sets $\Ak$ and $\bar{\mathcal{A}}$.
We plot the cardinality $|\Ak \setminus \bar{\mathcal{A}}|$ above the center axis of each panel, and $|\bar{\mathcal{A}} \setminus \Ak|$ below it. 
Both metrics of overestimation and underestimation, respectively, vanish exactly when $\Ak = \bar{\mathcal{A}}$.
The vertical dashed line in each panel marks the iteration $k$ after which $\Ak=\bar{\mathcal{A}}$ consistently. 

In general, of the $37$ runs that do not satisfy the criteria of \Cref{thm.identification}, the active set estimate $\Ak$ does not settle at $\mathcal{A}(\bar\xb)$.

\begin{figure}
    \centering
    \includegraphics[width=0.7\linewidth]{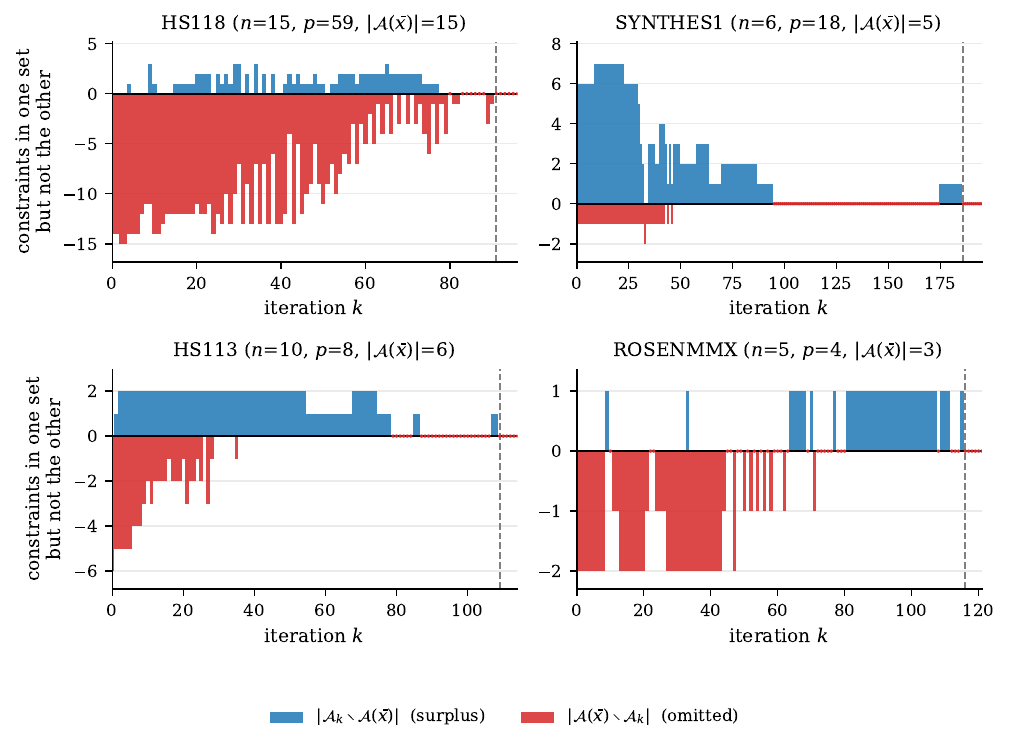}
    \caption{Set differences between the estimate $\Ak$ and the active set $\bar{\mathcal{A}}$ at the reference solution, on four problems.  Above the axis, constraints in $\Ak$ that are not active at $\bar{\xb}$; below it, active constraints that $\Ak$ omits. Both vanish exactly when $\Ak = \bar{\mathcal{A}}$.}
    \label{fig.identification}
\end{figure}

\section{Conclusion}
In this paper, we proposed a model-based trust-region method for derivative-free optimization with inequality constraints that deliberately maintains feasible incumbents and aims to avoid evaluating likely inactive constraints. 
Our method computes steps using a subproblem suitable for a feasible direction method and imposes only the constraints in an active-set estimate formed on a ball larger than the trust region.
We demonstrated that every limit point of the incumbent solutions generated by our methods is a Fritz-John point, and is moreover a KKT point under the MFCQ.
Under standard nondegeneracy conditions, our active-set estimate identifies the constraints active at the solution. 
On appropriately selected CUTEst problems, our method produced feasible trial points far more often than a variant of our method that, as an ablation, only replaced our subproblem with an SQP subproblem.
Moreover, a variant of our method that only builds or updates constraint models corresponding to our active-set estimate reduced the number of constraint evaluations per point and solved problems sooner than a variant of our method that consistently built or updated all constraint models; with a large budget, the two variants solved nearly the same fraction of problems. 
Our active set estimate also identified the true active set of a reference solution on most problems with runs that converge to a reference solution, as predicted by our theory. 

Several questions remain open. 
First, our analysis assumes fully linear models of all constraints at every iteration, whereas the lazy variant that performed best in our experiments does not; extending the theory to models that are accurate only for the constraints in $\Ak$ would be a natural next step. 
Second, and of critical empirical importance, although the incumbents of our method are always feasible, our method presently builds or updates constraint models using interpolation at potentially infeasible values. 
Building constraint models using only feasible points would make the method fully applicable to unrelaxable constraints; a model-building method resembling the method of Hough and Roberts~\cite{hough2022model}, but adapted to general simulation-based constraints (as opposed to convex algebraic constraints) is a likely avenue for success in this direction.

\section*{Software availability}
The code used to produce the numerical results in this paper is available at
\url{https://github.com/POptUS/LMZ-2026-code}. 
We do not include the source code of CONORBIT~\cite{regis2017conorbit}.
Readers wishing to reproduce our full results should contact the authors of
CONORBIT to obtain their code.

\bibliographystyle{siamplain}
\bibliography{refs_siopt}
\end{document}